\newcommand{\R}{\mathbb{R}}
\newcommand{\N}{\mathbb{N}}
\newcommand{\Z}{\mathbb{Z}}
\newcommand{\Ccal}{\mathcal{C}}
\newcommand{\vol}{\mathrm{vol}}
\newcommand{\cone}{\mathrm{cone}}
\journalname{}
\begin{document}

\title{A semidefinite programming hierarchy for packing problems in
discrete geometry\thanks{The authors were supported by Vidi grant
639.032.917 from the Netherlands Organization for Scientific
Research (NWO).}
}

\author{David de Laat \and Frank Vallentin}

\institute{D.~de Laat \at
Delft Institute of Applied Mathematics, Delft
University of Technology, P.O. Box 5031, 2600 GA Delft, The
Netherlands
\email{mail@daviddelaat.nl}           
\and
F.~Vallentin \at
Mathematisches Institut, Universit\"at zu
K\"oln, Weyertal 86--90, 50931 K\"oln, Germany
\email{frank.vallentin@uni-koeln.de}
}

\date{Received: date / Accepted: date}

\maketitle

\begin{abstract}
Packing problems in discrete geometry can be modeled as finding
  independent sets in infinite graphs where one is interested in
  independent sets which are as large as possible.  For finite graphs
  one popular way to compute upper bounds for the maximal size of an
  independent set is to use Lasserre's semidefinite programming
  hierarchy. We generalize this approach to infinite graphs. For this
  we introduce topological packing graphs as an abstraction for
  infinite graphs coming from packing problems in discrete
  geometry. We show that our hierarchy converges to the independence
  number.
\keywords{Lasserre hierarchy \and weighted independence number (stability number) \and infinite graphs \and geometric packing problems \and moment measures}
 \subclass{90C22 \and 52C17}
\end{abstract}

\section{Introduction}

\subsection{Packing problems in discrete geometry}
\label{ssec:packing problems in discrete geometry}

Many, often notoriously difficult, problems in discrete geometry can
be modeled as packing problems in graphs where the vertex set is an
uncountable set having additional geometric structure. 

The most famous
example is the sphere packing problem in $3$-dimensional space, the
Kepler problem, which was solved by Hales \cite{Hales2005a} in
1998. Here the vertex set is~$\R^3$ and two points are adjacent
whenever their Euclidean distance is in the open interval $(0,2)$.

An \emph{independent set} of an undirected graph $G = (V, E)$ is a
subset of the vertex set which does not span an edge. In the sphere
packing case, an independent set corresponds to centers of unit balls
which do not intersect in their interior. Now one is trying to find an
independent set which covers as much space as possible. What ``much''
means depends on the situation. When the vertex set $V$, the
\emph{container}, is compact and when we pack identical shapes we can
simply count and we use the \emph{independence number}
\[
\alpha(G) = \sup\{|I| : I \subseteq V,\, I \text{ is independent}\}.
\]

If the objects are of different size we provide them with a weight
$w(x)$ and we use the \emph{weighted independence number}
\[
\alpha_w(G) = \sup\Big\{\sum_{x \in I} w(x) : I \subseteq V,\, I \text{ is independent}\Big\}.
\]
In the non-compact sphere packing case one needs to use a density
version of the independence number since maximal independent sets have
infinite cardinality: The \emph{(upper) point density} of an independent set
$I \subset \mathbb{R}^3$ is
\[
\delta(I) = \limsup_{R \to \infty} \frac{|I \cap [-R,R]^3|}{\vol([-R,R]^3)},
\]
where $[-R,R]^3$ is the cube centered at the origin with side length $2R$.
This measures the number of centers of unit balls per unit volume. To
determine the geometric density of the corresponding sphere packing we
multiply $\delta(I)$ by the volume of the unit ball.

\medskip

More examples include:
\begin{itemize}
\item[---] \emph{Error correcting $q$-ary codes}:
$V = \mathbb{F}_q^n$, where $\{x,y\} \in E$ if their
Hamming distance lies in the open interval $(0,d)$. If $q = 2$ we
speak about \emph{binary codes} and if we restrict to all code words
having the same Hamming norm we speak about \emph{constant weight codes}.\\[-1.7ex]
\item[---] \emph{Spherical codes:} $V = S^{n-1}$,  where $\{x,y\} \in E$
  if their inner product lies in the open interval $(\cos(\theta), 1)$.\\[-1.7ex]
\item[---]  \emph{Codes in real projective space:} $V = \R\mathrm{P}^{n-1}$, where $\{x,y\} \in E$
  if their distance lies in the open interval $(0, d)$.\\[-1.7ex]
\item[---] \emph{Sphere packings:} $V = \R^n$, where $\{x,y\} \in E$ if
  their Euclidean distance lies in the open interval $(0, 2)$.\\[-1.7ex]
\item[---] \emph{Binary sphere packings:} $V = \R^n \times \{1,2\}$ where
  $\{(x,i), (y,j)\} \in E$ if the Euclidean distance between $x$ and
  $y$ lies in the open interval $(0,r_i + r_j)$ and
  $w(x,i) = r_i^n \vol B_n$, where $B_n$ is the unit ball. \\[-1.7ex]
\item[---] \emph{Binary spherical cap packings:} $V = S^{n-1} \times \{1, 2\}$
where $\{(x,i), (y,j)\} \in E$ if the inner product of $x$ and $y$ lies in the
open interval $(\cos(\theta_i + \theta_j), 1)$ and $w(x, i)$ is the volume of the
spherical cap $\{z \in S^{n-1} : x \cdot z \geq \cos(\theta_i) \}$.\\[-1.7ex]
\item[---] \emph{Packings of congruent copies of a convex body:} $V =
  \R^n \rtimes \mathrm{SO}(n)$ where $(x,A)$ and $(y,B)$ are adjacent if $x +
  A\mathcal{K}^\circ \cap y + B \mathcal{K}^\circ \neq \emptyset$, where
  $\mathcal{K}^\circ$ is the interior of the convex body $\mathcal{K}$.
\end{itemize}

Currently, these problems have been solved in only a few special
cases. One might expect that they will never be solved in full
generality, for all parameters. Finding good lower bounds by
constructions and good upper bounds by obstructions are both
challenging tasks. Over the last years the best known results were
achieved with computer assistance: Algorithms like the adaptive
shrinking cell scheme of Torquato and Jiao~\cite{Torquato2009a}
generate dense packings and give very good lower bounds. The
combination of semidefinite programming and harmonic analysis often
gives the best known upper bounds for these packing problems. This
method originated from work of Hoffman~\cite{Hoffman1970a},
Delsarte~\cite{Delsarte1973a}, and Lov\'asz~\cite{Lovasz1979a}.

\subsection{Lasserre's hierarchy for finite graphs}

Computing the independence number of a finite graph is an
$\mathrm{NP}$-hard problem as shown by Karp~\cite{Karp}. Approximating
optimal solutions of $\mathrm{NP}$-hard problems in combinatorial
optimization with the help of linear and semidefinite optimization is
a very wide and active area of research. The most popular semidefinite
programming hierarchies for $\mathrm{NP}$-hard combinatorial
optimization problems are the Lov\'asz-Schrijver hierarchy
\cite{Lovasz1991a} (the $N^+$-operator) and the hierarchy of
Lasserre~\cite{Lasserre2002a}. Laurent~\cite{Laurent2003a} showed that
Lasserre's hierarchy is stronger than the Lov\'asz-Schrijver
hierarchy.

We now give a formulation of Lasserre's hierarchy for computing the
independence number of a finite graph~$G = (V, E)$. Here we follow
Laurent~\cite{Laurent2003a}. The \emph{$t$-th step of Lasserre's
  hierarchy} is:
\[
\mathrm{las}_t(G)=\max\Big\{ \sum_{x \in V} y_{\{x\}} :
y \in \R^{I_{2t}}_{\geq 0}, \; y_\emptyset = 1, \; M_t(y) \text{ is
  positive semidefinite}
\Big\},
\]
where $I_{t}$ is the set of all independent sets with at most
$t$ elements and where $M_t(y) \in \R^{I_t \times I_t}$ is the \emph{moment
matrix} defined by the vector $y$: Its $(J,J')$-entry equals
\[
\left(M_t(y)\right)_{J, J'} = 
\begin{cases}
y_{J \cup J'} & \text{if } J \cup J' \in I_{2t},\\
0 & \text{otherwise.}
\end{cases}
\]

The first step in Lasserre's hierarchy coincides with
the $\vartheta'$-number, the strengthened version of Lov\'asz
$\vartheta$-number \cite{Lovasz1979a} which is due to Schrijver
\cite{Schrijver1979a}; for a proof see for instance the book by Schrijver \cite[Theorem
67.11]{Schrijver2003a}. Furthermore the hierarchy converges to
$\alpha(G)$ after at most $\alpha(G)$ steps:
\[
\vartheta'(G) = \mathrm{las}_1(G)\geq \mathrm{las}_2(G) \geq
\ldots \geq \mathrm{las}_{\alpha(G)}(G) = \alpha(G).
\]

Lasserre \cite{Lasserre2002a} showed this convergence in the general
setting of hierarchies for $0$/$1$ polynomial optimization problems by
using Putinar's Positivstellensatz~\cite{Putinar1993a}. Laurent
\cite{Laurent2003a} gave an elementary proof, which we discuss in
Section~\ref{sec:finite convergence}.

Many variations are possible to set up a semidefinite programming
hierarchy: For instance one can consider only ``interesting''
principal submatrices to simplify the computation and one can also add
more constraints coming from problem specific arguments. In fact, in
the definition of $\mathrm{las}_t(G)$ we used the nonnegativity
constraints $y_S \geq 0$ for $S \in I_{2t}$. Even without them, the
convergence result holds, and the first step in the hierarchy
coincides with the Lov\'asz $\vartheta$-number.

A rough classification for all these variations can be given in terms
of \textit{$n$-point bounds}. This refers to all variations which make
use of variables $y_S$ with $|S| \leq n$. An $n$-point bound is 
capable of using obstructions coming from the local interaction of
configurations having at most $n$ points. For instance the Lov\'asz
$\vartheta$-number is a $2$-point bound and the $t$-th step in
Lasserre's hierarchy is a $2t$-point bound. The relation between
$n$-point bounds and Lasserre's hierarchy was first made explicit by
Laurent~\cite{Laurent2007a} in the case of bounds for binary codes.

\subsection{Topological packing graphs}
\label{ssec:topological packing graphs}

The aim of this paper is to define and analyze a semidefinite
programming hierarchy which upper bounds the independence number for
infinite graphs arising from packing problems in discrete
geometry. For this we consider graphs where vertices which are close
are adjacent, and where vertices which are adjacent will stay adjacent
after small enough perturbations. These two conditions will be essential at
many places in this paper. We formalize them by the following
definition.

\begin{definition}
\label{def:topological packing graph}
A graph whose vertex set is a Hausdorff topological space is called a
\emph{topological packing graph} if each finite clique is contained in
an open clique. An \emph{open clique} is an open subset of the vertex set where every two vertices are adjacent.
\end{definition}

It clearly suffices to verify the condition for cliques of size one and
two.

Of course, every graph is a packing graph when we endow the vertex set
with the discrete topology. However, weaker topologies give stronger
conditions on the edge sets. For instance, when the vertex set of a
topological packing graph is compact, then the independence number is
finite because every single vertex is a clique.

A \emph{distance graph} $G=(V,E)$ is a graph where $(V,d)$ is a metric
space, and where there exists $D \subseteq (0, \infty)$ such that $x$
and $y$ are adjacent precisely when $d(x, y) \in D$. If $D$ is open
and contains the interval $(0, \delta)$ for some $\delta > 0$, then
$G$ is a topological packing graph. That $D$ contains an interval
starting from $0$ implies that vertices which are close are adjacent,
and that $D$ is open implies that adjacent vertices will stay adjacent
after small enough perturbations. The binary spherical cap packing graph as
defined in Section~\ref{ssec:packing problems in discrete geometry} is
a compact topological packing graph with the usual topology on the
vertex set $S^{n-1} \times \{1,2\}$. And although there exists a
metric compatible with this topology which gives the graph as a
distance graph\footnote{Assume $\theta_1 < \theta_2$ and let
  $\epsilon$ be some number strictly between $(1 -
  \theta_1/\theta_2)/2$ and $1$. Let $D = (0, 1)$, and let $d((x,i),
  (y,j))$ be given by $\epsilon \delta_{i \neq j} + (1 - \epsilon
  \delta_{i \neq j}) \arccos(x \cdot y)\, (\theta_1+\theta_2)^{-1}$
  when $x \cdot y < \cos(\theta_i + \theta_j)$ and $1$ otherwise.}, it
is easier and more natural to work directly with the topological
packing graph structure.

Notice that in Definition~\ref{def:topological packing graph}
requiring all cliques to be contained in an open clique --- which by Zorn's lemma is
equivalent to all maximal cliques being open --- would give a strictly
stronger condition.\footnote{Consider the graph with vertex set $[0,
  1] \times \mathbb{Z}$ where $(x, i)$ and $(y, j)$ are adjacent if $i
  = j$ or when $x$ and $y$ are both strictly smaller than $|i-j|^{-1}$
  (for $i \neq j$). Here each finite clique is contained in an open clique, 
  but the countable clique $\{0\} \times \mathbb{Z}$ is not.}

\subsection{Generalization of Lasserre's hierarchy}
\label{ssec:generalization}

Now we introduce our generalization of Lasserre's hierarchy for 
compact topological packing graphs.

Before we go into the technical details we like to comment on the
choice of spaces in our generalization: In Lasserre's hierarchy for
finite graphs the optimization variable $y$ lies in the cone\footnote{In this paper cones are always assumed to be convex.}
$\R^{I_{2t}}_{\geq 0}$. One might try to use the same cone when
$I_{2t}$ is uncountable. But then there are too many variables and it
is impossible to express the objective function. At the other extreme
one might try to restrict this cone to finitely (or countably)
supported vectors. But then we do not know how to develop a duality
theory like the one in Section~\ref{sec:duality}. A duality theory is
important for concrete computations: Minimization problems can be used
to derive upper bounds rigorously.  We use a cone of Borel
measures where we have ``one degree of freedom'' for every open set.

In Section~\ref{sec:topology} we use the topology of $V$ to equip the
set $I_t$, consisting of the independent sets which have at most $t$
elements, with a Hausdorff topology. There we also use the topological packing graph condition to show that $I_t$ is compact. 

Let $\Ccal(I_{2t})$ be
the set of continuous real-valued functions on $I_{2t}$.  By the Riesz
representation theorem (see e.g.\ \cite[Chapter
2.2]{BergChristensenRessel1984}) the topological dual of
$\Ccal(I_{2t})$, where the topology is defined by the supremum norm,
can be identified with the space $\mathcal{M}(I_{2t})$ of signed Radon
measures. A \emph{signed Radon measure} is the difference of two Radon
measures, where a \emph{Radon measure} $\nu$ is a locally finite
measure on the Borel algebra satisfying \emph{inner regularity}:
$\nu(B) = \sup \{ \nu(C) : C \subseteq B, \, C \text{ compact}\}$ for
each Borel set $B$. Nonnegative functions in $\Ccal(I_{2t})$ form the
cone $\Ccal(I_{2t})_{\geq 0}$. Its conic dual $(\Ccal(I_{2t})_{\geq 0})^*$ is the cone of \emph{positive Radon measures}
\[
\mathcal{M}(I_{2t})_{\geq 0} = \{\lambda \in
\mathcal{M}(I_{2t}) : \lambda(f) \geq 0 \text{ for all } f \in \Ccal(I_{2t})_{\geq 0}\}.
\]

Denote by $\Ccal(I_t \times I_t)_{\text{sym}}$ the space of
\emph{symmetric kernels}, which are the continuous functions $K \colon
I_t \times I_t \to \R$ such that
\[
K(J, J') = K(J', J) \text{ for all }  J,J' \in I_t. 
\]
We say that a symmetric kernel $K$ is
\emph{positive definite} if
\[
(K(J_i, J_j))_{i,j=1}^m \text{ is
 positive semidefinite for all }  m \in \N \text{ and } J_1, \ldots,
J_m \in I_t. 
\]
The positive definite kernels form the cone $\Ccal(I_t \times I_t)_{\succeq
  0}$. The dual of $\Ccal(I_t \times I_t)_{\text{sym}}$ can be
identified with the space of symmetric signed Radon measures
$\mathcal{M}(I_t \times I_t)_{\text{sym}}$. Here a signed Radon
measure $\mu \in \mathcal{M}(I_t \times I_t)$ is \emph{symmetric}
if
\[
\mu(E \times E') = \mu(E'
\times E) \text{ for all Borel sets } E \text{ and } E'. 
\]
We say that a measure $\mu \in \mathcal{M}(I_t \times
I_t)_{\text{sym}}$ is \emph{positive definite} if it lies in the
dual cone $\mathcal{M}(I_t \times I_t)_{\succeq 0} = (\Ccal(I_t
\times I_t)_{\succeq 0})^*$.

\medskip

Now we are ready to define our generalization:

\begin{itemize}
\item[---] The optimization variable is $\lambda \in \mathcal{M}(I_{2t})_{\geq 0}$. \\[-1.7ex]
\item[---] The objective
function evaluates $\lambda$ at $I_{=1}$, where in general,
\[
I_{=t} = \{ S \in I_t : |S| = t\},
\]
and so when $t = 1$ we simply deal with all vertices, as singleton
sets. This is similar to the objective function $\sum_{x \in V}
y_{\{x\}}$ in Lasserre's hierarchy for finite graphs. \\[-1.7ex]
\item[---] The normalization condition reads $\lambda(\{\emptyset\}) = 1$. \\[-1.7ex]
\item[---] For generalizing the moment matrix condition ``$M_t(y)$ is
  positive semidefinite'' we use a dual approach. Let $T_t$ be the operator such that for all vectors $y$ and all matrices $Y$ we have $\langle M_t(y), Y \rangle_1 = \langle y, T_tY \rangle_2$, where $\langle \cdot, \cdot\rangle_1$ is the trace inner product of matrices and $\langle \cdot, \cdot\rangle_2$ is standard vector inner product. Instead of directly generalizing the operator $M_t$, we will dualize the following generalization of the operator $T_t$:
\[
A_t \colon \Ccal(I_t \times I_t)_{\text{sym}} \to
  \Ccal(I_{2t}) \; \text{ by } \; A_tK(S) = \sum_{J, J' \in I_t : J \cup J' = S} K(J, J').
\]
We have $\|A_t K\|_{\infty} \leq 2^{2t} \|K\|_{\infty}$, so $A_t$ is
bounded and hence continuous. Thus there exists the adjoint $A_t^*
\colon \mathcal{M}(I_{2t}) \to \mathcal{M}(I_t \times
I_t)_{\text{sym}}$ and the moment matrix condition reads $A_t^*
\lambda \in \mathcal{M}(I_t \times I_t)_{\succeq 0}$.
\end{itemize}

\begin{definition} The $t$-th step of the \emph{generalized hierarchy} is
\[
\mathrm{las}_t(G) = \sup \Big\{ \lambda(I_{=1}) : \lambda \in \mathcal{M}(I_{2t})_{\geq 0},\;
\lambda(\{\emptyset\}) = 1,\;
A_t^* \lambda \in \mathcal{M}(I_t \times I_t)_{\succeq 0} \Big\}.
\]
\end{definition}

Clearly, we have a nonincreasing chain 
\begin{equation}
\label{eq:chain}
\text{las}_{1}(G) \geq \text{las}_{2}(G) \geq \ldots
 \geq \text{las}_{\alpha(G)}(G) = \text{las}_{\alpha(G)+1}(G) = \ldots
\end{equation}
which stabilizes after $\alpha(G)$ steps, and specializes to the original hierarchy if
$G$ is a finite graph. Each step gives an upper bound for
$\alpha(G)$ because for every independent set $S$ the measure
\[
\lambda = \sum_{R \in I_{2t} : R
  \subseteq S} \delta_R, \quad \text{where } \delta_R \text{ is the
  delta measure at } R,
\]
is a feasible solution for $\text{las}_{t}(G)$ with objective value
$|S|$. To see this we note that $\lambda(\{\emptyset\}) = 1$, and for
any $K \in \Ccal(I_t \times I_t)_{\succeq 0}$ we have
\begin{align*}
\langle K, A_t^*\lambda\rangle = \langle A_t K, \lambda \rangle &= \sum_{R \in I_{2t} : R \subseteq S} \; \sum_{J, J' \in I_t : J \cup J' = R} K(J, J') \\
&= \sum_{J, J' \in I_t : J, J' \subseteq S} K(J, J') \geq 0.
\end{align*}

In Section~\ref{sec:duality} we consider the dual program of
$\mathrm{las}_t(G)$, which is
\begin{align*}
\mathrm{las}_t(G)^* = \inf \Big\{K(\emptyset, \emptyset) : \;& K \in \Ccal(I_t \times I_t)_{\succeq 0},\\[-0.5em]
& A_tK(S) \leq -1_{I_{=1}}(S) \text{ for } S \in I_{2t} \setminus
\{\emptyset\}\Big\},
\end{align*}
and we show that strong duality holds in every step:

\begin{theorem}
\label{thm:strong duality}
  Let $G$ be a compact topological packing graph.  For every $t \in
  \N$ we have $\mathrm{las}_t(G) = \mathrm{las}_t(G)^*$, and if
  $\mathrm{las}_t(G)$ is finite\footnote{We show this in Remark~\ref{rem:bounded}.}, then the optimum in
  $\mathrm{las}_t(G)$ is attained.
\end{theorem}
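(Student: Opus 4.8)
The plan is to read $\mathrm{las}_t(G)$ and $\mathrm{las}_t(G)^*$ as a dual pair of conic linear programs over the two dual pairings $(\Ccal(I_{2t}), \mathcal{M}(I_{2t}))$ and $(\Ccal(I_t \times I_t)_{\text{sym}}, \mathcal{M}(I_t \times I_t)_{\text{sym}})$, each carrying the weak-$*$ topology, with $A_t$ and its adjoint $A_t^*$ as the coupling operators. The first step is weak duality, which is immediate: for primal-feasible $\lambda$ and dual-feasible $K$ one has $0 \le \langle A_t^* \lambda, K\rangle = \langle \lambda, A_t K\rangle$, since $A_t^*\lambda \in \mathcal{M}(I_t\times I_t)_{\succeq 0}$ and $K \in \Ccal(I_t\times I_t)_{\succeq 0}$, while splitting the integral at $\emptyset$ and using $A_t K(\emptyset) = K(\emptyset,\emptyset)$, the normalization $\lambda(\{\emptyset\}) = 1$, and $A_tK \le -1_{I_{=1}}$ on $I_{2t}\setminus\{\emptyset\}$ gives $\langle \lambda, A_t K\rangle \le K(\emptyset,\emptyset) - \lambda(I_{=1})$. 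Hence $\lambda(I_{=1}) \le K(\emptyset,\emptyset)$, so $\mathrm{las}_t(G) \le \mathrm{las}_t(G)^*$.

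For the attainment statement I would argue by weak-$*$ compactness. Since $I_{2t}$ is compact (Section~\ref{sec:topology}), the Banach--Alaoglu theorem makes norm-bounded subsets of $\mathcal{M}(I_{2t})$ weak-$*$ compact. Assuming $\mathrm{las}_t(G)$ finite (Remark~\ref{rem:bounded}), I would check that the feasible set is uniformly bounded in total mass: the normalization $\lambda(\{\emptyset\}) = 1$ together with positive-definiteness of $A_t^*\lambda$ bounds the mass of $\lambda$ on each of the finitely many strata $I_{=k}$ in terms of the fixed value at $\emptyset$, paralleling how the $2\times 2$ principal minors through the $\emptyset$-index bound the entries $y_J$ in the finite case. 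The feasible set is also weak-$*$ closed, since each defining condition --- $\lambda \ge 0$, $\lambda(1_{\{\emptyset\}}) = 1$, and $\langle \lambda, A_t K\rangle \ge 0$ for every $K \in \Ccal(I_t\times I_t)_{\succeq 0}$ --- is an intersection of weak-$*$ closed half-spaces and hyperplanes. As $\lambda \mapsto \lambda(1_{I_{=1}})$ is weak-$*$ continuous, it attains its supremum on this weak-$*$ compact convex set.

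The crux is the reverse inequality $\mathrm{las}_t(G)^* \le \mathrm{las}_t(G)$, and here the usual Slater constraint qualification is unavailable: the cone $\Ccal(I_t\times I_t)_{\succeq 0}$ has empty interior in the supremum norm --- a continuous kernel induces a compact integral operator, whose eigenvalues accumulate at $0$, so arbitrarily small perturbations destroy positive-definiteness --- and likewise $\mathcal{M}(I_{2t})_{\geq 0}$ has empty weak-$*$ interior, so no interior-point Slater condition holds on either side. Instead I would obtain the absence of a duality gap from a conic strong-duality theorem whose hypothesis is a \emph{closedness} condition rather than a Slater point, namely the weak-$*$ closedness of the image cone
\[
\{(\lambda(\{\emptyset\}),\, A_t^*\lambda - \mu,\, \lambda(I_{=1})) : \lambda \in \mathcal{M}(I_{2t})_{\geq 0},\ \mu \in \mathcal{M}(I_t\times I_t)_{\succeq 0}\}.
\]
Equivalently, for $\gamma > \mathrm{las}_t(G)$ one separates the point corresponding to ``objective value $\gamma$ with feasible constraints'' from this set by a weak-$*$ continuous functional, which is automatically represented by an element of the predual, yielding the dual data with the correct cone memberships and a genuine positive-definite kernel $K$ satisfying $K(\emptyset,\emptyset) \le \gamma$.

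I expect this closedness to be the main obstacle, and it is exactly where the topological packing structure must be used. Images of closed convex cones under continuous linear maps need not be closed in infinite dimensions, so the argument must exploit the compactness of $I_{2t}$ from Section~\ref{sec:topology} and the continuity of $A_t$ to pass to weak-$*$ limits along nets: the mass-bounded slices are weak-$*$ compact, their images under the constraint map are therefore weak-$*$ closed, and one recovers a genuine feasible dual solution in the limit rather than merely an element of the bidual. Once weak-$*$ closedness is established the separation above is valid, giving the reverse inequality; combined with weak duality and the attainment argument this yields $\mathrm{las}_t(G) = \mathrm{las}_t(G)^*$ with the primal optimum attained whenever the common value is finite.
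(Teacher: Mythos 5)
Your skeleton---weak duality, the observation that Slater's condition fails because both cones have empty interior, and strong duality via weak-$*$ closedness of an image cone plus separation---is in spirit the same route the paper takes (Barvinok's closed cone condition), and your weak duality computation is correct. But the two steps that carry the actual weight are missing or misjustified. The crux you correctly single out, closedness of the cone, is not established by your sketch. That cone is the Minkowski difference $K_1 - K_2$ of $K_1 = \{(A_t^*\xi, \xi(I_{=1})) : \xi \in \mathcal{M}(I_{2t})_{\geq 0},\ \xi(\{\emptyset\}) = 0\}$ and $K_2 = \{(\mu, 0) : \mu \in \mathcal{M}(I_t \times I_t)_{\succeq 0}\}$, and a net $A_t^*\xi_\beta - \mu_\beta$ can converge while $\|\xi_\beta\|, \|\mu_\beta\| \to \infty$ with cancellation between the two terms, so one cannot restrict to ``mass-bounded slices'' a priori; weak-$*$ compactness of such slices says nothing about closedness of the difference. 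Worse, the cone $\mathcal{M}(I_t \times I_t)_{\succeq 0}$ is \emph{not} locally compact when $V$ is infinite (the paper proves exactly this in a remark: nets $\lambda_\beta \otimes \lambda_\beta$ of constant norm $r^2$ converge weak-$*$ to the origin), so no compactness can be extracted on the $\mu$-side at all. The paper's resolution is the Klee--Dieudonn\'e theorem: $K_1 - K_2$ is closed provided $K_1 \cap K_2 = \{0\}$, both cones are closed, and $K_1$ is locally compact. Verifying these hypotheses is where the work lies and has no counterpart in your proposal: (i) $K_1 \cap K_2 = \{0\}$ says that a positive measure $\xi$ with $\xi(\{\emptyset\}) = 0$ and $A_t^*\xi \succeq 0$ vanishes identically, proved with a sequence of carefully chosen test kernels; (ii) closedness and local compactness of $K_1$ are proved by exhibiting a compact convex base (Banach--Alaoglu applied to $\{\xi \geq 0 : \xi(I_{2t}) = 1,\ \xi(\{\emptyset\}) = 0\}$), which needs injectivity of $A_t^*$, i.e.\ surjectivity of $A_t$, itself a lemma proved via Tietze's extension theorem. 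Once the cone is closed, the conic duality theorem gives \emph{both} the absence of a gap and attainment of the supremum, so no separate attainment argument is needed.

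Your separate attainment argument also has a genuine gap. The claim that $\lambda(\{\emptyset\}) = 1$ together with $A_t^*\lambda \succeq 0$ bounds the mass on each stratum, ``paralleling the $2\times 2$ minors through the $\emptyset$-index,'' does not survive the passage to measures. Pairing $A_t^*\lambda$ with $1_{\{\emptyset\}}$ and $1_{I_{=k}}$ (continuous by Lemma~\ref{lem:cont card}) yields only $\lambda(I_{=k})^2 \leq \lambda(I_{=k}) + \int N_k \, d\lambda$, where $N_k$ counts pairs of $k$-sets whose union is a \emph{larger} independent set: each stratum is controlled only in terms of the higher strata, and the strata with $t < k \leq 2t$ never occur as diagonal data at all. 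The finite-graph argument bounds individual entries $y_S$, which have no meaning for atomless measures. A correct mass bound does hold, but it requires the packing structure, which your argument never invokes: cover $V$ by finitely many open cliques (Definition~\ref{def:topological packing graph} plus compactness), refine to a subordinate Borel partition, and apply Cauchy--Schwarz for the positive definite measure $A_t^*\lambda$ to this partition, using that no independent set of size at least $2$ lies inside a single clique. That your cited ingredients alone cannot suffice is visible from the edgeless graph on an atomless compact space (not a packing graph): there the analogous feasible set has normalization and positive definiteness yet unbounded mass.
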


In Section~\ref{sec:finite convergence} we show that the
chain~\eqref{eq:chain} converges to the independence number:

\begin{theorem} 
\label{thm:finite convergence}
Let $G$ be a compact topological packing graph. Then,
\[
\mathrm{las}_{\alpha(G)}(G) = \alpha(G).
\]
\end{theorem}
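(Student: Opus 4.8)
The plan is to prove the nontrivial inequality $\mathrm{las}_{\alpha(G)}(G) \le \alpha(G)$; the reverse inequality is already contained in~\eqref{eq:chain}. I would generalize Laurent's elementary argument from the finite case. Set $t = \alpha(G)$. Since every independent set has at most $\alpha(G)$ elements, $I_{2t} = I_t = I$, the compact space of all independent sets, so the whole problem lives on the single space $I$, and $A_t$ maps $\Ccal(I \times I)_{\text{sym}}$ into $\Ccal(I)$. For $S \in I$ write $\zeta_S = \sum_{R \subseteq S} \delta_R \in \mathcal{M}(I)_{\geq 0}$ for its \emph{zeta measure}; this is a finite positive measure because $S$ has only finitely many subsets, all independent, and one checks $\zeta_S(\{\emptyset\}) = 1$ and $\zeta_S(I_{=1}) = |S|$. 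The goal is to show that every feasible $\lambda$ decomposes as $\lambda = \int_I \zeta_S \, d\omega(S)$ for a \emph{positive} measure $\omega$ with $\omega(I) = 1$. Granting this, $\lambda(I_{=1}) = \int_I |S| \, d\omega(S) \le \alpha(G)\,\omega(I) = \alpha(G)$, which finishes the proof.

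To produce $\omega$ I would introduce the \emph{zeta operator} $\mathcal{Z}\colon \Ccal(I) \to \Ccal(I)$, $(\mathcal{Z}f)(S) = \sum_{R \subseteq S} f(R)$, whose algebraic inverse is the M\"obius transform $(\mathcal{Z}^{-1}f)(S) = \sum_{R \subseteq S} (-1)^{|S \setminus R|} f(R)$. Assuming both preserve continuity (see below), $\mathcal{Z}$ is a bijection of $\Ccal(I)$, its adjoint acts on measures by $\mathcal{Z}^*\mu = \int_I \zeta_S \, d\mu(S)$, and $\mathcal{Z}^*$ is then an invertible operator on $\mathcal{M}(I)$. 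I define $\omega := (\mathcal{Z}^*)^{-1}\lambda$, so that $\lambda = \mathcal{Z}^*\omega = \int_I \zeta_S \, d\omega(S)$ as measures. Evaluating this identity on the Borel set $\{\emptyset\}$ and using $\zeta_S(\{\emptyset\}) = 1$ with the normalization $\lambda(\{\emptyset\}) = 1$ gives $\omega(I) = 1$.

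The heart of the argument is to show $\omega \geq 0$, the analogue of the congruence step $M_\alpha(y) = Z D Z^{\tsp} \succeq 0 \Rightarrow D \succeq 0$ in the finite case. I would test against rank-one kernels: for $g \in \Ccal(I)$ the kernel $g \otimes g$ lies in $\Ccal(I \times I)_{\succeq 0}$, and a short computation gives $\mathcal{Z}(A_t(g \otimes g))(S) = \sum_{J, J' \subseteq S} g(J) g(J') = (\mathcal{Z}g(S))^2$. Combining $\lambda = \mathcal{Z}^*\omega$ with the feasibility condition $A_t^*\lambda \in \mathcal{M}(I \times I)_{\succeq 0}$ yields
\[
\omega\big((\mathcal{Z}g)^2\big) = \omega\big(\mathcal{Z}(A_t(g \otimes g))\big) = \lambda\big(A_t(g \otimes g)\big) = \langle g \otimes g,\, A_t^*\lambda\rangle \geq 0.
\]
Since $\mathcal{Z}$ is surjective onto $\Ccal(I)$, the functions $\mathcal{Z}g$ range over all of $\Ccal(I)$, so $\omega(h^2) \geq 0$ for every $h \in \Ccal(I)$. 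Finally, any nonnegative $f \in \Ccal(I)$ equals $(\sqrt{f})^2$ with $\sqrt{f} \in \Ccal(I)$, whence $\omega(f) \geq 0$; as this holds for all nonnegative continuous $f$, the measure $\omega$ is positive. Together with $\omega(I)=1$ this gives the decomposition sought in the first paragraph.

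The step I expect to be the main obstacle is the one I deferred: verifying that $\mathcal{Z}$ and its M\"obius inverse genuinely map $\Ccal(I)$ into $\Ccal(I)$ (equivalently, that they are bounded), since only then is $\mathcal{Z}$ a bijection of $\Ccal(I)$ and $\omega$ a Radon measure. Continuity of $S \mapsto \sum_{R \subseteq S} f(R)$ is not automatic: as $S$ varies in $I$ its points may collide in the limit, altering the combinatorics of its subsets. This is exactly where the topology put on $I$ in Section~\ref{sec:topology} and the compactness of $I$ --- both consequences of the topological packing graph structure --- must enter, and I would prove continuity by analyzing how the subset lattice of $S$ behaves under the limits permitted by that topology.
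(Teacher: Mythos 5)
Your proposal is correct and follows essentially the same route as the paper: your measure $\omega = (\mathcal{Z}^*)^{-1}\lambda$ is exactly the measure $\sigma$ of Proposition~\ref{prop:signed measure} (constructed there by the same M\"obius-inversion computation), and your positivity argument via the identity $\mathcal{Z}(A_t(g \otimes g)) = (\mathcal{Z}g)^2$ on rank-one kernels is precisely the paper's positivity step, which takes $g$ there to be the M\"obius transform of $\sqrt{\cdot}$ rather than quantifying over all squares. The continuity of the zeta/M\"obius transforms that you defer is likewise taken largely for granted in the paper (it is what legitimizes $\chi_R$ as an integrand in the weak vector-valued integrals), and your sketch of how to establish it --- pairwise disjoint open cliques forcing nearby independent sets to have matching cardinalities and subset lattices, as in Lemma~\ref{lem:cont card} --- is the right one.
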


A variation of $\text{las}_t(G)$ can be used to upper bound the
weighted independence number of a weighted compact topological packing
graph $G$ with a continuous weight function $w \colon V \to \R_{\geq
  0}$. We extend~$w$, with the obvious abuse of notation, to a
function $w \colon I_{2t} \to \R_{\geq 0}$ where only singleton sets
have positive weight. It turns out, by Lemma~\ref{lem:cont card}, that
also the extension is continuous. Then we replace the objective
function $\lambda(I_{=1})$ by $\lambda(w)$.

\subsection{Explicit computations in the literature}

Explicit computations of $n$-point bounds have been done in a variety
of situations. The following table provides a guide to the literature:

\medskip

\begin{center}
\scriptsize
\begin{tabular}{@{}llll@{}}
\toprule
\bf Packing problem & \bf $2$-point bound & \bf $3$-point bound & \bf $4$-point bound\\
\midrule
Binary codes & Delsarte \cite{Delsarte1973a} & Schrijver \cite{Schrijver2005} & \parbox{0.15\textwidth}{Gijswijt,\\ Mittelmann,\\ Schrijver \cite{GijswijtMittelmanSchrijver2011}}\\
\midrule
$q$-ary codes & Delsarte \cite{Delsarte1973a} & \parbox{0.17\textwidth}{Gijswijt,\\ Schrijver,\\ Tanaka \cite{GijswijtSchrijverTanaka2006}} & \\
\midrule
Constant weight codes & Delsarte \cite{Delsarte1973a} & \parbox{0.17\textwidth}{Schrijver \cite{Schrijver2005},\\  Regts \cite{Regts2009}} & \\
\midrule
Spherical codes & \parbox{0.17\textwidth}{Delsarte,\\Goethals,\\Seidel \cite{DelsarteGoethalsSeidel1977}} & \parbox{0.16\textwidth}{Bachoc,\\Vallentin \cite{BachocVallentin2008}} &  \\
\midrule
Codes in $\R\mathrm{P}^{n-1}$ & \parbox{0.17\textwidth}{Kabatiansky,\\Levenshtein \cite{KabatianskyLevenstein1978}}& \parbox{0.16\textwidth}{Cohn,\\Woo \cite{CohnWoo2011}} & \\
\midrule
Sphere packings & \parbox{0.17\textwidth}{Cohn,\\ Elkies \cite{CohnElkies2003}}&  \\
\midrule
\parbox{0.22\textwidth}{Binary sphere and\\ spherical cap packings}
& \parbox{0.17\textwidth}{de Laat, \\Oliveira,\\ Vallentin
  \cite{LaatOliveiraVallentin2012}}& \\
\midrule
\parbox{0.22\textwidth}{Congruent copies\\ of a convex body}
& \parbox{0.17\textwidth}{Oliveira,\\ Vallentin
  \cite{OliveiraVallentin2013}}& \\
\bottomrule
\end{tabular}
\end{center}

\medskip

For the first three packing problems in this table one can use
Lasserre's hierarchy for finite graphs. For the last five packing
problems in this table our generalization can be used, where in the
last three cases one has to perform a compactification of the vertex
set first.

We elaborate on the connection between these $n$-point bounds and our
hierarchy in Section~\ref{sec:n-point bounds}. The convergence of the
hierarchy, shows that this approach is in theory capable of solving
any given packing problem in discrete geometry.  One attractive
feature of the hierarchy is that already its first steps give strong
upper bounds as one can see from the papers cited in the table above.

\section{Topology on sets of independent sets}
\label{sec:topology}

Let $G = (V, E)$ be a topological packing graph. In this section we
introduce a topology on $I_{t}$, the set of independent sets having
cardinality at most~$t$.

We equip the direct product $V^t$ with the product topology and the
image of $V^t$ under the map
\[
q \colon (v_1, \ldots, v_t) \mapsto \{v_1, \ldots, v_t\}
\]
with the quotient topology. When we add the empty set to the image we
obtain the collection $\mathrm{sub}_t(V)$ of all subsets of $V$ of
cardinality at most $t$, which obtains its topology from the disjoint
union topology. Compactness of $\mathrm{sub}_t(V)$ follows immediately from
compactness of $V$. Handel~\cite[Proposition 2.7]{Handel2000} shows
that it is Hausdorff.

Given $U_1, \ldots, U_r \subseteq V$, define
\[
(U_1, \ldots, U_r)_t = \{ S \in \mathrm{sub}_t(V) : S \subseteq U_1
\cup \cdots \cup U_r, \; S \cap U_i \neq \emptyset \text{ for } 1 \leq
i \leq r
\}.
\]
Handel \cite{Handel2000} observes 
\[
q^{-1}((U_1, \ldots, U_r)_t) = \bigcup_{\substack{\tau : \{1, \ldots, t\} \to
  \{1, \ldots, r\}\\ \tau \text{ surjective}}} U_{\tau(1)} \times \cdots \times U_{\tau(t)}.
\]
This shows that if the sets $U_i$ are open, then $(U_1, \ldots,
U_r)_t$ is open. In fact, if $\mathcal{B}$ is a base for $V$, then
\[
\mathcal{B}_t = \{(U_1, \ldots, U_r)_t : 1 \leq r \leq t,\; U_1, \ldots, U_r \in \mathcal{B}\}
\]
is a base for $\mathrm{sub}_t(V)$. 
Moreover, if $\{u_1,\ldots,u_r\}$ is an element in an open set $U$ in
$\mathrm{sub}_t(V)$, then there are open neighborhoods $U_i$ of $u_i$
such that the open neighborhood $(U_1, \ldots, U_r)_t$ of $\{u_1,
\ldots, u_r\}$ is a contained in $U$.

\medskip

We now endow $I_t$ with a topology as a subset of
$\mathrm{sub}_t(V)$. Clearly, $I_{=1}$ is homeomorphic to~$V$. It is
also immediate that $I_t$ is Hausdorff. Furthermore, it is compact:

\begin{lemma}
\label{lem:compact I}
Let $G = (V, E)$ be a compact topological packing graph. Then $I_t$
is compact for every $t \in \N$.
\end{lemma}

\begin{proof}
  We will show that $I_t$ is closed, respectively that its complement
  $D_t = \mathrm{sub}_t(V) \setminus I_t$ is open in the compact space
  $\mathrm{sub}_t(V)$.  Let $\{x_1, \ldots, x_r\} \in D_t$ be
  arbitrary. Without loss of generality we may assume that $x_1$ and
  $x_2$ are adjacent. By the topological packing graph condition there
  exists an open clique $U \subseteq V$ containing both $x_1$ and
  $x_2$. Since $V$ is a Hausdorff space there exist disjoint open sets
  $U_1$ and $U_2$ such that $x_1 \in U_1 \subseteq U$ and $x_2 \in U_2
  \subseteq U$. Each set in $(U_1, U_2, V, \ldots, V)_t$ contains at
  least one edge, so $ (U_1, U_2, V, \ldots, V)_t \subseteq D_t.  $
  The set $(U_1, U_2, V, \ldots, V)_t$ is an open neighborhood of
  $\{x_1, \ldots, x_r\}$. Hence, $D_t$ is open. \qed
\end{proof}

If the topology on $V$ comes from a metric, then the topology on
$\mathrm{sub}_t(V)$ is given by the Hausdorff distance, see for
example Borsuk and Ulam \cite{Borsuk1931a}. This indicates that
subsets of nonequal cardinality can be close in the topology on
$\mathrm{sub}_t(V)$. However, in the following lemma, we use the
topological packing graph condition to show that independent sets of
different cardinality are in different connected components of
$I_t$.

\begin{lemma}
\label{lem:cont card}
Let $G = (V, E)$ be a topological packing graph. The map
$I_t \to \N$, $S \mapsto |S|$ is continuous for every $t \in \N$. In
particular, $I_{=t}$ is both open and closed.
\end{lemma}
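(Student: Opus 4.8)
The plan is to prove that the cardinality map is \emph{locally constant}. Since $\N$ carries the discrete topology, a map into $\N$ is continuous if and only if it is locally constant, so it suffices to find, around an arbitrary $S \in I_t$, an open neighborhood in $I_t$ on which $S' \mapsto |S'|$ is constant. So I fix $S = \{x_1,\ldots,x_r\} \in I_t$ with the $x_i$ pairwise distinct and $r = |S|$, and aim to produce a neighborhood $N$ of $S$ in $I_t$ every element of which has exactly $r$ elements.

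First I would construct suitable neighborhoods of the vertices $x_i$. Each singleton $\{x_i\}$ is a clique of size one, so by the topological packing graph condition it is contained in an open clique $W_i \subseteq V$ with $x_i \in W_i$. Since $V$ is Hausdorff I can separate the finitely many points $x_1,\ldots,x_r$ by pairwise disjoint open sets; intersecting these with the respective $W_i$ yields pairwise disjoint open sets $U_1,\ldots,U_r$ with $x_i \in U_i \subseteq W_i$. As observed in the discussion of the base $\mathcal{B}_t$, the set $(U_1,\ldots,U_r)_t$ is then an open neighborhood of $S$ in $\mathrm{sub}_t(V)$, so $N = (U_1,\ldots,U_r)_t \cap I_t$ is an open neighborhood of $S$ in $I_t$.

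Then I would verify that the cardinality is constantly $r$ on $N$. Let $T \in N$. By the definition of $(U_1,\ldots,U_r)_t$ we have $T \subseteq U_1 \cup \cdots \cup U_r$ and $T \cap U_i \neq \emptyset$ for each $i$; since the $U_i$ are pairwise disjoint, these nonempty intersections contribute distinct points and force $|T| \geq r$. On the other hand $T$ is independent, and since $U_i \subseteq W_i$ lies in an open clique, any two distinct points of $U_i$ are adjacent; hence $T$ contains at most one point of each $U_i$. As $T$ is covered by the disjoint union $U_1 \cup \cdots \cup U_r$, this gives $|T| = \sum_{i=1}^r |T \cap U_i| \leq r$. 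Combining the two bounds, $|T| = r$ for every $T \in N$. This exhibits the desired neighborhood, so the cardinality map is locally constant and therefore continuous. For the final assertion, $I_{=t}$ is the preimage of $\{t\}$ and hence open; and since the cardinality takes only the finitely many values $0,1,\ldots,t$ on $I_t$, the complement of $I_{=t}$ equals the finite union $\bigcup_{k \neq t} I_{=k}$ of open sets, so $I_{=t}$ is closed as well.

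I expect the only real idea — and hence the crux — to be the interplay of the two opposing bounds: the basic open set $(U_1,\ldots,U_r)_t$ forces \emph{at least} $r$ points (using Hausdorffness to make the $U_i$ disjoint), while independence together with the open-clique property of each $W_i$ forces \emph{at most} one point per $U_i$, hence at most $r$. The topological packing graph hypothesis enters exactly in producing the open cliques $W_i$; this is what prevents independent sets close to $S$ from acquiring additional nearby points, precisely the phenomenon the text flags when noting that subsets of unequal cardinality can be close in the Hausdorff distance. Compactness of $V$ plays no role here, consistent with the lemma being stated for all topological packing graphs.
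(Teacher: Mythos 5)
Your proof is correct and takes essentially the same approach as the paper's: both arguments surround the points of $S$ by pairwise disjoint open cliques $U_i$ (using the packing condition plus Hausdorffness), pass to the basic open neighborhood $(U_1,\ldots,U_r)_t$, and combine the lower bound $|T| \geq r$ from disjointness with the upper bound $|T| \leq r$ from independence against the clique property. The only difference is cosmetic: the paper verifies continuity with a convergent net eventually entering this neighborhood, whereas you phrase it as local constancy of the cardinality map into the discrete space $\N$.
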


\begin{proof}
  Let $\{S_\alpha\}$ be a net in $I_t$ converging to $\{x_1, \ldots,
  x_r\} \in I_{t}$, where we assume the $x_i$ to be pairwise
  different. By the topological packing graph condition, there exist
  pairwise disjoint open cliques $U_i$ such that $x_i \in U_i$. The
  set $(U_1, \ldots, U_r)_t$ is open and contains $\{x_1, \ldots,
  x_r\}$. Hence, we eventually have $S_\alpha \in (U_1, \ldots,
  U_r)_t$. Then $|S_\alpha| \geq r$ since the $U_i$ are pairwise
  disjoint and $|S_\alpha| \leq r$ since the $U_i$ are cliques. \qed
\end{proof}

\section{Duality theory of the generalized hierarchy}
\label{sec:duality}

\subsection{A primal-dual pair}
\label{ssec:primal-dual pair}

In this section we derive the dual program of the $t$-th step in our
hierarchy $\mathrm{las}_t(G)$.

\medskip

We want to have a symmetric situation between primal and dual. We
consider the dual pairs $(\Ccal(I_{2t}), \mathcal{M}(I_{2t}))$ and
$(\Ccal(I_t \times I_t)_{\text{sym}}, \mathcal{M}(I_t \times
I_t)_{\text{sym}})$ together with the corresponding nondegenerate
bilinear forms
\[
\langle f, \lambda \rangle = \lambda(f) = \int f(S) \, d\lambda(S)
\quad \text{and} \quad
\langle K, \mu \rangle = \mu(K) = \int K(J,J') \, d\mu(J,J').
\]
We endow the spaces with the weakest topologies compatible with the
pairing: the weak topology on the function spaces and the weak*
topology on the measure spaces. From now on we will always use these
topologies unless explicitly stated otherwise. Because the cones
defined in Section~\ref{ssec:generalization} are closed, it follows
from the bipolar theorem that
\[
(\mathcal{M}(I_{2t})_{\geq 0})^* = \Ccal(I_{2t})_{\geq 0} \quad \text{and} \quad
(\mathcal{M}(I_t \times I_t)_{\succeq 0})^*
= \Ccal(I_t \times I_t)_{\succeq 0}.
\]
Hence, the situation is completely symmetric.

\medskip

Recall that the operator
\[
A_t \colon \Ccal(I_t \times I_t)_{\text{sym}} \to \Ccal(I_{2t}),
\quad A_tK(S) = \sum_{J, J' \in I_t : J \cup J' = S} K(J, J')
\]
is continuous in the norm topologies, so it follows that it is
continuous in the weak topologies. In the next subsection we use that
its adjoint $A_t^*$ is injective:

\begin{lemma}
\label{lem:bounded and surjective}
Let $G = (V, E)$ be a compact topological packing graph.
Then the operator $A_t$ is surjective for every $t \in \N$.
\end{lemma}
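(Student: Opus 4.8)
The plan is to prove surjectivity of $A_t$ directly, by exhibiting, for an arbitrary target function $f \in \Ccal(I_{2t})$, a symmetric kernel $K \in \Ccal(I_t \times I_t)_{\text{sym}}$ with $A_t K = f$. The operator acts by $A_t K(S) = \sum_{J \cup J' = S} K(J,J')$, so the natural idea is to split the value $f(S)$ evenly among the pairs $(J,J')$ of independent subsets whose union is $S$. Since $S \in I_{2t}$ has at most $2t$ elements, the number of ordered pairs $(J,J')$ with $J,J' \in I_t$ and $J \cup J' = S$ is a finite combinatorial quantity depending only on $|S|$; call it $N(|S|)$. The first step is to define a candidate kernel of the form $K(J,J') = c(|J \cup J'|)\, f(J \cup J')$ for a suitable normalization $c(\cdot)$, chosen so that summing over all admissible pairs recovers $f$ exactly.

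Concretely, I would first count $N(n)$, the number of ordered pairs of subsets $(J,J')$ of an $n$-element set with $J \cup J' = S$ and $|J|,|J'| \le t$; for $n \le 2t$ this is a positive integer, and setting $c(n) = 1/N(n)$ ensures $A_t K(S) = \sum_{J \cup J' = S} c(|S|) f(|S|\text{'s set}) = f(S)$. The subtlety is that $J \cup J' = S$ forces $|S| \le 2t$, but when $|S| > t$ we cannot have either $J$ or $J'$ alone contain $S$, so $N(n)$ must be counted with the cardinality constraints $|J|,|J'| \le t$ respected; this is still a finite positive count for every $n \le 2t$, so $c(n)$ is well defined and positive. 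Because $K$ only depends on $J,J'$ through the union $J \cup J'$, it is automatically symmetric, so $K \in \Ccal(I_t \times I_t)_{\text{sym}}$ provided it is continuous.

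The main obstacle, and the place where the topological work from Section~\ref{sec:topology} is essential, is continuity of $K$. The map $(J,J') \mapsto J \cup J'$ sends $I_t \times I_t$ into $I_{2t}$, and I must check it is continuous; this should follow because the union operation is induced by concatenation $V^t \times V^t \to V^{2t}$ composed with the quotient map $q$, and the quotient topologies are compatible with this. Then $K = (c \circ |\,\cdot\,|) \cdot (f \circ \text{union})$ is continuous as a product of continuous functions, using that $|\cdot|$ is continuous by Lemma~\ref{lem:cont card} (so $c(|J\cup J'|)$ is continuous, being a locally constant function composed with $|\cdot|$) and that $f$ is continuous by assumption. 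Here the packing-graph hypothesis enters twice: it guarantees $I_{2t}$ is compact (Lemma~\ref{lem:compact I}) so that $f$ is bounded and $K$ genuinely lies in $\Ccal(I_t \times I_t)_{\text{sym}}$, and it underlies the continuity of the cardinality map via Lemma~\ref{lem:cont card}. I expect verifying continuity of the union map, and organizing the normalization count $N(|S|)$ cleanly across the different values of $|S|$, to be the two points requiring the most care; the surjectivity identity $A_t K = f$ itself is then a one-line computation.
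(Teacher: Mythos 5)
Your normalization idea is exactly the paper's: the paper divides $g(S)$ by $A_t\mathbbm{1}(S)$, where $\mathbbm{1}$ is the constant-one kernel, and $A_t\mathbbm{1}(S)$ is precisely your count $N(|S|)$ of ordered pairs $(J,J')\in I_t\times I_t$ with $J\cup J'=S$. But your proposal has a genuine gap: the kernel $K(J,J')=c(|J\cup J'|)\,f(J\cup J')$ is not defined on all of $I_t\times I_t$, because the union of two independent sets need not be independent. Your claim that the union map sends $I_t\times I_t$ into $I_{2t}$ is false --- it only maps into $\mathrm{sub}_{2t}(V)$ (take adjacent vertices $x,y$: then $\{x\},\{y\}\in I_1$ but $\{x,y\}\notin I_2$), and $f$ has no values outside $I_{2t}$. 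So your formula defines a continuous function $h$ only on the closed subset $u^{-1}(I_{2t})\subseteq I_t\times I_t$, where $u(J,J')=J\cup J'$, and the missing step is to extend $h$ continuously to the whole space. That extension is the heart of the paper's proof: since $I_t\times I_t$ is compact Hausdorff (hence normal) and $u^{-1}(I_{2t})$ is closed (because $I_{2t}$ is closed in $\mathrm{sub}_{2t}(V)$, by the argument of Lemma~\ref{lem:compact I}), Tietze's extension theorem yields $H\in\Ccal(I_t\times I_t)$ agreeing with $h$ on $u^{-1}(I_{2t})$, and the identity $A_tH=f$ survives because for $S\in I_{2t}$ every pair $(J,J')$ with $J\cup J'=S$ lies in $u^{-1}(I_{2t})$, where $H$ coincides with your formula.

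Note that the cheap fix of setting $K=0$ off $u^{-1}(I_{2t})$ does not work: $I_{2t}$ is closed but in general not open in $\mathrm{sub}_{2t}(V)$ (in the sphere packing graph, the dependent pairs $\{x,y\}$ converge to the independent singleton $\{x\}$ as $y\to x$), so $u^{-1}(I_{2t})$ is not open, and the zero-extension would be discontinuous at boundary points where $h$ is nonzero, e.g.\ at $(\{x\},\{x\})$ with $f(\{x\})\neq 0$. So an extension theorem is unavoidable here. The rest of your outline is sound: continuity of the union map is indeed needed (the paper cites Handel for it), continuity of the normalizing factor follows from Lemma~\ref{lem:cont card}, and symmetry is harmless --- your formula is symmetric where defined, and after applying Tietze one can replace $H(J,J')$ by $\tfrac12(H(J,J')+H(J',J))$ without disturbing its values on the symmetric set $u^{-1}(I_{2t})$.
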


\begin{proof}
Let $g$ be a function in $\Ccal(I_{2t})$. The continuity of
\[
u \colon I_t \times I_t \to \mathrm{sub}_{2t}(V),\; (J, J') \mapsto J \cup J'
\]
follows from \cite{Handel2000}. Hence
\[
h \colon u^{-1}(I_{2t}) \to \R,\; (J, J') \mapsto \frac{g(J \cup J')}{A_t \mathbbm{1} (J \cup J')}
\]
is continuous where $\mathbbm{1}$ is the kernel which evaluates to $1$
everywhere.

The set $I_{2t}$ is closed in $\mathrm{sub}_{2t}(V)$,
so the preimage $u^{-1}(I_{2t})$ is closed in $I_t \times I_t$. Since
$I_t \times I_t$ is a compact Hausdorff space there exists, by
Tietze's extension theorem, a function $H \in \Ccal(I_t \times
I_t)$ such that $H(J, J') = h(J, J')$ for all $J, J' \in I_t$. For
each $S \in I_{2t}$ we then have
\begin{align*}
A_tH(S) & \; = \; \sum_{J, J' \in I_t : J \cup J' = S} H(J,J') \; = \;
\sum_{J, J' \in I_t : J \cup J' = S} h(J, J')\\
& \; = \; \frac{1}{A_t\mathbbm{1}(S)} \sum_{J, J' \in I_t : J
  \cup J' = S} g(J \cup J') = g(S).
\end{align*}
\qed
\end{proof}

Using the theory of duality in conic optimization problems, see for
instance Barvinok \cite{Barvinok2002}, we derive the dual hierarchy:
\begin{align*}
\mathrm{las}_t(G)^* = \inf \Big\{K(\emptyset, \emptyset) : \; & K \in \Ccal(I_t \times I_t)_{\succeq 0},\\[-0.5em]
& A_tK(S) \leq -1_{I_{=1}}(S) \text{ for } S \in I_{2t} \setminus
\{\emptyset\}\Big\},
\end{align*}
where one should note that by Lemma~\ref{lem:cont card} the
characteristic function $1_{I_{=1}}$ is continuous. It follows from
weak duality that $\mathrm{las}_t(G) \leq \mathrm{las}_t(G)^*$, and
hence $\mathrm{las}_t(G)^*$ upper bounds the independence number. In
the following lemma we give a simple direct proof.

\begin{lemma}
\label{lem:primal upper bound}
Let $G = (V, E)$ be a compact topological packing graph. Then
\[
\alpha(G) \leq \mathrm{las}_t(G)^*
\]
holds for all $t \in \N$.
\end{lemma}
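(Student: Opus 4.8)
The plan is to mirror the elementary argument from the finite-graph case: test the positive definiteness of a feasible dual kernel $K$ against the all-ones vector indexed by the subsets of a maximum independent set, and then regroup the resulting sum using the linear constraints of the dual program.

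First I would fix a feasible solution $K \in \Ccal(I_t \times I_t)_{\succeq 0}$ of $\mathrm{las}_t(G)^*$ together with a maximum independent set $S = \{x_1, \ldots, x_n\}$, where $n = \alpha(G)$ is finite because $V$ is compact and every singleton is a clique. Every subset of $S$ is independent, so the finite family $\{J \subseteq S : |J| \le t\}$ is contained in $I_t$, and by the definition of positive definiteness the principal submatrix $(K(J,J'))_{J,J' \subseteq S,\, |J|,|J'| \le t}$ is positive semidefinite. Evaluating the associated quadratic form at the all-ones vector yields
\[
0 \le \sum_{\substack{J, J' \subseteq S \\ |J|, |J'| \le t}} K(J, J').
\]

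Next I would regroup this double sum according to the union $R = J \cup J'$. The crucial point is that the constraint $J \cup J' = R$ already forces $J, J' \subseteq R \subseteq S$, so summing $K(J,J')$ over all pairs with $J \cup J' = R$ reproduces exactly $A_t K(R)$; moreover each such $R$ is a subset of the independent set $S$ of cardinality at most $2t$, hence $R \in I_{2t}$. Therefore the sum above equals $\sum_{R \subseteq S,\, |R| \le 2t} A_t K(R)$. Separating the term $R = \emptyset$, for which $A_t K(\emptyset) = K(\emptyset, \emptyset)$, and applying the feasibility constraint $A_t K(R) \le -1_{I_{=1}}(R)$ to the remaining nonempty $R$, I obtain
\[
0 \le K(\emptyset, \emptyset) - \sum_{\substack{R \subseteq S \\ 0 < |R| \le 2t}} 1_{I_{=1}}(R) = K(\emptyset, \emptyset) - n,
\]
since the only contributions to the last sum come from the $n$ singletons $\{x_i\}$. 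Thus $\alpha(G) = n \le K(\emptyset, \emptyset)$, and taking the infimum over all feasible $K$ gives $\alpha(G) \le \mathrm{las}_t(G)^*$.

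I do not anticipate a serious obstacle: the argument is purely algebraic once the correct finite principal submatrix has been selected. The only step requiring care is the regrouping identity, where one must verify that restricting $J, J'$ to subsets of $S$ discards no term of $A_t K(R)$ for $R \subseteq S$ --- which holds precisely because $J \cup J' = R$ automatically confines $J$ and $J'$ to $R$. The continuity of $K$ and of $1_{I_{=1}}$ (the latter furnished by Lemma~\ref{lem:cont card}) is needed only to guarantee that the dual program is well-posed, not for the inequality itself.
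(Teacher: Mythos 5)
Your proof is correct and is essentially the paper's own argument: test positive definiteness of $K$ against the all-ones vector indexed by the small subsets of an independent set, regroup the double sum by unions into $\sum_R A_t K(R)$, and apply the dual constraints to the nonempty terms. The only cosmetic difference is that the paper works with an arbitrary independent set $L$ and concludes $|L| \leq K(\emptyset,\emptyset)$ directly, which sidesteps even the (harmless) appeal to attainment of the supremum defining $\alpha(G)$.
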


\begin{proof}
Suppose $K$ is feasible and $L$ is an independent set. Then
\begin{align*}
0 &\leq \sum_{J, J' \in \operatorname{sub}_t(L)} K(J, J') = \sum_{S \in \operatorname{sub}_{2t}(L)} A_tK(S)\\
&= K(\emptyset, \emptyset) + \sum_{x \in L} A_t K(\{x\}) + \sum_{S \in \operatorname{sub}_{2t}(L) \setminus \operatorname{sub}_{1}(L)} A_t K(S) 
\leq K(\emptyset, \emptyset) - |L|.
\end{align*}
\qed
\end{proof}
The hierarchy $\mathrm{las}_t(G)^*$ stabilizes after $\alpha(G)$ steps, because the variables and constraints are the same for each $t \geq \alpha(G)$. By Lemma~\ref{lem:cont card} the set $I_t$ is both open and closed in $I_{t+1}$, which means we can extend a feasible kernel $K$ of $\mathrm{las}_t(G)^*$ by zeros to obtain a feasible solution to $\mathrm{las}_{t+1}(G)^*$ with the same objective value. This shows that the hierarchy is nonincreasing; that is, $\mathrm{las}_{t+1}(G)^* \leq \mathrm{las}_t(G)^*$ for all $t$. These results also follow from  strong duality as discussed next.

\subsection{Strong duality; Proof of Theorem~\ref{thm:strong duality}}
\label{ssec:strong duality}

In this section we prove Theorem~\ref{thm:strong duality}: We have
strong duality between the problems $\mathrm{las}_t(G)$ and
$\mathrm{las}_t(G)^*$. We will show the finiteness of
$\mathrm{las}_t(G)^*$ in Remark~\ref{rem:bounded}.

For proving Theorem~\ref{thm:strong duality} we make use of a closed
cone condition, which for example is explained in
Barvinok~\cite[Chapter IV.7]{Barvinok2002}. For this we have to show
that $\mathrm{las}_t(G)$ has a feasible solution, which we already
know from Section~\ref{ssec:generalization}, and that the cone
\[
K =
\big\{(A_t^*\xi - \mu, \xi(I_{=1})) : \mu \in \mathcal{M}(I_t \times I_t)_{\succeq
  0},\, \xi \in \mathcal{M}(I_{2t})_{\geq 0},\, \xi(\{\emptyset\}) = 0\big\}
\]
is closed in $\mathcal{M}(I_t \times I_t)_{\mathrm{sym}} \times \R$. The above cone is the Minkowski difference of
\[
K_1 = \big\{(A_t^*\xi, \xi(I_{=1})) : \xi \in \mathcal{M}(I_{2t})_{\geq 0},\,
\xi(\{\emptyset\}) = 0\big\}
\]
and
\[
K_2 = \big\{(\mu, 0) : \mu \in \mathcal{M}(I_t \times I_t)_{\succeq
  0}\big\}.
\]
By a theorem of Klee~\cite{Klee1955} and
Dieudonn\'e~\cite{Dieudonne1966} the Minkowski difference $K_1 - K_2$
is closed when the three conditions
\begin{enumerate}
\item[(A)] $K_1 \cap K_2 = \{0\}$,
\item[(B)] $K_1$ and $K_2$ are closed,
\item[(C)] $K_1$ is locally compact.
\end{enumerate}
are satisfied. The fact that $K_2$ is closed follows immediately since
$\mathcal{M}(I_t \times I_t)_{\succeq 0}$ is closed. We now verify the
other conditions\footnote{The proof of this lemma has been updated compared to the published version, fixing a problem found independently by Jan Rolfes and Andrew Salmon.}:

\begin{lemma}
\label{lem:trivial intersection}
$K_1 \cap K_2 = \{0\}$.
\end{lemma}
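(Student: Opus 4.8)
The plan is to show that if a pair $(\mu,0)$ lies in both $K_1$ and $K_2$, then it must be zero. So suppose there is an element $(\mu, c) \in K_1 \cap K_2$. Membership in $K_2$ forces the second coordinate to vanish, so $c = 0$, and $\mu \in \mathcal{M}(I_t \times I_t)_{\succeq 0}$. Membership in $K_1$ gives a measure $\xi \in \mathcal{M}(I_{2t})_{\geq 0}$ with $\xi(\{\emptyset\}) = 0$ such that $\mu = A_t^*\xi$ and $\xi(I_{=1}) = 0$. The goal is to deduce $\xi = 0$, which then gives $\mu = A_t^*\xi = 0$.

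The key idea is to test the positive definite measure $\mu = A_t^*\xi$ against a cleverly chosen positive definite kernel, and separately to exploit nonnegativity of $\xi$. First I would record that since $\mu \in \mathcal{M}(I_t \times I_t)_{\succeq 0}$, we have $\langle K, \mu\rangle = \langle A_t K, \xi\rangle \geq 0$ for every positive definite kernel $K$. I want to produce, from the two constraints $\xi(\{\emptyset\}) = 0$ and $\xi(I_{=1}) = 0$, enough information to conclude $\xi$ is supported on $I_{2t}\setminus(I_{=0}\cup I_{=1})$, i.e.\ on sets of cardinality at least $2$, and then contradict nonnegativity unless $\xi = 0$. The natural kernel to use is a rank-one kernel $K(J,J') = f(J)f(J')$ for a continuous $f \in \Ccal(I_t)$, which is automatically positive definite; for such $K$ one computes
\[
\langle A_t K, \xi\rangle = \int_{I_{2t}} \sum_{J \cup J' = S} f(J) f(J') \, d\xi(S) \geq 0.
\]
Choosing $f$ to be supported near the cardinality strata (recall from Lemma~\ref{lem:cont card} that $I_{=k}$ is open and closed, so indicator-type functions of strata are continuous) lets me isolate the contribution of $\xi$ on each stratum $I_{=k}$.

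The main obstacle I anticipate is converting the global inequality $\langle A_t K, \xi\rangle \geq 0$ into a pointwise or stratum-by-stratum statement strong enough to force $\xi = 0$ rather than merely some positivity. Concretely, for $S$ with $|S| = k \geq 2$ the inner sum $\sum_{J\cup J' = S} f(J)f(J')$ can be made negative for suitable $f$: splitting $S$ into the decompositions $J \cup J' = S$ and examining the quadratic form in the values of $f$ on subsets of $S$, one finds the diagonal-dominant structure is not automatic, so I can engineer a kernel whose associated quadratic form evaluates negatively on the $\delta_S$ contribution while staying globally positive definite. Combining the sign constraint from $\mu \succeq 0$ with the sign constraint $\xi \geq 0$ pins $\xi$ to the trivial measure. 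I would verify this first on the finite-graph model (where $A_t^*$ is the familiar moment-matrix operator and the statement reduces to the fact that a nonnegative vector whose moment matrix vanishes off the empty and singleton coordinates, with those coordinates also zero, is itself zero) and then lift the argument to the measure setting using that the $I_{=k}$ are clopen and that continuous functions separate points on the compact Hausdorff space $I_{2t}$. The clopen stratification from Lemma~\ref{lem:cont card} is what makes the lift routine rather than delicate.
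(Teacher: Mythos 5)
You set the lemma up correctly: an element of $K_1 \cap K_2$ has the form $(A_t^*\xi,0)$ with $\xi \in \mathcal{M}(I_{2t})_{\geq 0}$, $\xi(\{\emptyset\})=0$, $\xi(I_{=1})=0$, $A_t^*\xi \in \mathcal{M}(I_t\times I_t)_{\succeq 0}$, and it suffices to show $\xi=0$. The gap is that the kernel construction you yourself identify as the main obstacle is never carried out, and both routes you hint at fail. The rank-one route fails already for $t=1$: the only decompositions of an independent pair $\{x,y\}$ into elements of $I_1$ are $(\{x\},\{y\})$ and $(\{y\},\{x\})$, so $A_1(f\otimes f)(\{x,y\}) = 2f(\{x\})f(\{y\})$, and strict negativity on all of $I_{=2}$ would force $f$ to take opposite nonzero signs at every two distinct non-adjacent vertices; if $G$ has three pairwise non-adjacent vertices (i.e.\ $\alpha(G)\geq 3$) this is impossible, since the product of the three required negative numbers is a square. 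The per-point route (a kernel $K_S$ engineered for each $S$) fails because $\langle A_tK_S,\xi\rangle\geq 0$ is a \emph{global} inequality: negativity of $A_tK_S$ near $S$ says nothing about $\xi$ near $S$ unless you also control the sign of $A_tK_S$ on the rest of the support of $\xi$, which is an unknown closed subset of $\bigcup_{k\geq 2}I_{=k}$. Either way you need positive definite kernels with $A_tK\leq 0$ on \emph{every} stratum $I_{=k}$, $k\geq 2$, and strictly negative where you want to kill mass. Producing such kernels on infinite graphs is a substantial problem: for $t=1$ it is essentially Lemma~\ref{lem:theta is feasible}, which the paper proves by a compactness/Urysohn construction precisely because the finite-graph diagonal-dominance trick does not adapt; for the strata with $t<|S|\leq 2t$, where every decomposition $J\cup J'=S$ has both parts nonempty, you offer nothing at all. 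The clopen stratification of Lemma~\ref{lem:cont card} does not make this routine: a function supported on one stratum of $I_t$ spreads under $A_t(f\otimes f)$ over many strata of $I_{2t}$, and the difficulty is the sign pattern over decompositions, not the topology of the strata.

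The paper's proof needs no sign engineering on high strata because it localizes at $\emptyset$ instead: $\xi(\{\emptyset\})=0$ gives $A_t^*\xi(\{(\emptyset,\emptyset)\})=0$; testing against $g_n\otimes g_n$, with $g_n(\emptyset)=|n|$ and $g_n=1/n$ elsewhere, and letting $n\to\pm\infty$ kills $A_t^*\xi$ on $\{\emptyset\}\times(I_t\setminus\{\emptyset\})$ (the measure analogue of ``a positive semidefinite matrix with a zero diagonal entry has a zero row''), and a third kernel supported where at least one argument is $\emptyset$ is then used to recover the mass of $\xi$ from exactly these vanishing quantities (the strata with $|S|>t$, which admit no decomposition with an empty part, are where this last step requires care). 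Note that your own finite-graph sanity check, done correctly ($y\geq 0$, $y_\emptyset=0$, $M_t(y)\succeq 0$ imply $y=0$), is precisely this zero-row argument at $\emptyset$, not the sign engineering you propose. If you want to keep a strata-by-strata strategy, the workable version is an induction on cardinality using $f=\phi_J-\phi_{J'}$ with $\phi_J,\phi_{J'}$ bumps on small disjoint neighborhoods of $J$ and $J'$: by Lemma~\ref{lem:cont card} and continuity of the union map, the positive contributions of $A_t(f\otimes f)$ then land only on strata already known to carry no $\xi$-mass, the contribution near $J\cup J'$ is strictly negative, and compactness of $I_{=k}$ closes the induction step. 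Both the induction and these topological ingredients are essential, and neither appears in your proposal.
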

\begin{proof}
  We will show that $\xi \in \mathcal{M}(I_{2t})_{\geq 0}$ with
  $\xi(\{\emptyset\}) = 0 $ is the zero measure if $A_t^*\xi \in
  \mathcal{M}(I_t \times I_t)_{\succeq 0}$.

\smallskip

Let $f \in \Ccal(I_t \times I_t)_{\mathrm{sym}}$ be given by 
\[
f(J, J') = \begin{cases}
1 & \text{if } J = J' = \emptyset,\\
0 & \text{otherwise}.
\end{cases}
\]
Then 
$
A_t^*\xi(\{(\emptyset, \emptyset)\}) = \langle f, A_t^*\xi \rangle = \langle A_t f, \xi \rangle = \xi(\{\emptyset\}) = 0.
$

\smallskip

For $n \in \Z$ define $g_n \in \Ccal(I_t)$ by
\[
g_n(S) = \begin{cases}
|n| & \text{if } S = \emptyset,\\
1/n & \text{otherwise.}
\end{cases}
\]
Since $g_n \otimes g_n \in \Ccal(I_t \times I_t)_{\succeq 0}$ and
$A_t^*\xi \in \mathcal{M}(I_t \times I_t)_{\succeq 0}$ we have
$A_t^*\xi(g_n \otimes g_n) \geq 0$.  We have that $A_t^*\xi(g_n \otimes g_n)$ equates to
\[
n^2 A_t^*\xi\big(\{(\emptyset,
\emptyset)\}\big) \; + \; \frac{1}{n^2} A_t^*\xi \big(I_t \setminus \{\emptyset\}
\times I_t \setminus \{\emptyset\}\big) \; + \; 2\operatorname{sign}(n)
A_t^*\xi\big (\{\emptyset\} \times I_t \setminus \{\emptyset\} \big).
\]
The first term is zero, so the sum of the last two terms is
nonnegative for each $n$. By letting $n$ tend to plus and minus
infinity we see that $A_t^*\xi(\{\emptyset\} \times I_t 
\setminus \{\emptyset\}) = 0$.

\smallskip

Define $h \in \Ccal(I_t \times I_t)_{\text{sym}}$ by
\[
h(J, J') = \begin{cases}
1 & \text{if } J = \emptyset \text{ and } J' = \emptyset,\\
1/2 & \text{if } J = \emptyset \text{ xor } J' = \emptyset,\\
0 & \text{otherwise}.
\end{cases}
\]
Since $\xi \geq 0$ and
\[
\xi(I_{t}) = \langle A_t h, \xi \rangle = \langle h, A_t^*\xi \rangle = A_t^*\xi(\{(\emptyset, \emptyset)\}) + A_t^*\xi(\{\emptyset\} \times I_t \setminus \{\emptyset\}) = 0, 
\]
we have $\xi|_{I_t} = 0$.

If $V$ is a sufficiently small open set in $I_t$, then the union of two distinct sets in $V$ cannot be independent. This shows that  
\[
A_t(1_V \times 1_V)(S) = \sum_{J,J' \in I_t : J \cup J' = S} 1_V(J) 1_V(J') = 0
\]
whenever $S \in I_{2t} \setminus I_t$.

Let $S$ and $S'$ be arbitrary elements in $I_t$, and let $U$ and $U'$ be small open neighborhoods around $S$ and $S'$.

For $s = \pm 1$ we have
\begin{align*}
0 &\leq A_t^*\xi((1_U + s 1_{U'}) \otimes (1_U + s 1_{U'}))\\
&= A_t^*\xi(U \times U) + A_t^*\xi(U' \times U') + 2s A_t^*\xi(U \times U'),
\end{align*}
where the inequality follows because Urysohn's lemma says that $1_U + s 1_{U'}$ can be approximated arbitrarily well by continuous functions. 
Since $\xi|_{I_t} = 0$, we have 
\[
A_t^*\xi(1_U \times 1_U) = A_t^*\xi(1_{U'} \times 1_{U'}) =0
\]
for $U$ and $U'$ small enough.
This shows $2s A_t^*\xi(U \times U') \geq 0$, and since $s =\pm 1$ we have $A_t^*\xi(U \times U') = 0$. Since $S$ and $S'$ are arbitrary this shows $A_t^*\xi = 0$, and since $A_t^*$ is injective, we have $\xi = 0$.
\qed
\end{proof}

\begin{remark}
  The set $I_{2t}$ is a subset of the power set $2^V$. A power set is
  a \emph{monoid} with the associative binary operation $\cup$ and
  unit element $\emptyset$. Monoids have sufficient structure for
  defining functions of \emph{positive type}, which in this case are
  functions $f \colon 2^V \to \R$ for which the matrices $(f(J_i \cup
  J_j))_{i,j=1}^m$ are positive semidefinite for all $m \in \N$ and
  $J_1, \ldots, J_m \in 2^V$. This monoid is \emph{commutative} (i.e., $J \cup J' = J' \cup J$ for all $J,J' \in 2^V$) and
  \emph{idempotent} (i.e., $J \cup J = J$ for all $J \in
  2^V$), so the matrix
\[
\begin{pmatrix}
f(\emptyset) & f(J)\\
f(J) & f(J)
\end{pmatrix} \quad \text{is positive semidefinite},
\]
and so $0 \leq f(J) \leq f(\emptyset)$ for all $J \in 2^V$ \cite[p. 119]{BergChristensenRessel1984}. In
particular, a function of positive type which vanishes at the unit
element is identically zero. This resembles the situation in the proof
of Lemma~\ref{lem:trivial intersection}. To see this we show that one
can view $\lambda \in \mathcal{M}(I_{2t})$ with $A_t^*\lambda \in
\mathcal{M}(I_t \times I_t)_{\succeq 0}$ as a ``measure of positive
type''. For this we notice that a function $f \colon 2^V \to \R$ is of
positive type if and only if $\sum_{S \in 2^V} f(S) \sum_{J \cup J' =
  S} g(J) g(J') \geq 0$ for all finitely supported functions $g \colon
2^V \to \R$. Going from the monoid $2^V$ to the ``truncated monoid''
$I_{2t}$, and from functions to measures, we have the natural
definition that a measure $\lambda \in \mathcal{M}(I_{2t})$ is of
positive type if $\int A_t(g \otimes g)(S) \, d\lambda(S) \geq 0$ for
all $g \in \Ccal(I_{2t})$, which is the case if and only if
$A_t^*\lambda \in \mathcal{M}(I_t \times I_t)_{\succeq 0}$. Moreover,
if we define a convolution and an involution on $\Ccal(I_{2t})$ by $f
* g = A_t(f \otimes g)$ and $f^* = f$,
respectively, then a measure $\lambda$ is of positive type if and only
if $\lambda(f^* * f) \geq 0$ for all $f \in \Ccal(I_{2t})$. This
agrees with the definition of measures of positive type as given for
instance in \cite[Chapter 6.3]{Folland1995} for locally compact
groups, where a different algebra is used.
\end{remark}

Before we consider condition (C) we need some background: A cone is
\emph{locally compact} if it is locally compact as a topological
space, that is, each point in the cone is contained in a compact
neighborhood relative to the cone. A cone is locally compact if the
origin has a compact neighborhood relative to the cone: For each point
$x$ in the cone and each neighborhood $U$ of the origin there is an $r
> 0$ such that $x \in rU$. A \emph{convex base} $B$ of a cone
$K$ is a convex subset of the cone such that every nonzero $x \in K$ can
be written in a unique way as a positive multiple of an element in $B$. A
cone is \emph{pointed} if it does not contain a line. Now we can state
a theorem of Klee and Dieudonn\'e~\cite[(2.4)]{Klee1955}: A nonempty
pointed cone in a locally convex vector space is closed and
locally compact if and only if it admits a compact convex base.

\begin{lemma}
$K_1$ is closed and locally compact.
\end{lemma}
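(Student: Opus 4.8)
The plan is to invoke the cited theorem of Klee and Dieudonn\'e: since $K_1$ is a nonempty cone, it suffices to check that $K_1$ is pointed and that it admits a compact convex base. I would write $\Phi(\xi) = (A_t^*\xi, \xi(I_{=1}))$ and $C = \{\xi \in \mathcal{M}(I_{2t})_{\geq 0} : \xi(\{\emptyset\}) = 0\}$, so that $K_1 = \Phi(C)$. The map $\Phi$ is linear and weak*-continuous, because $A_t^*$ is the adjoint of the continuous operator $A_t$ and $\xi \mapsto \xi(I_{=1})$ is weak*-continuous, the function $1_{I_{=1}}$ being continuous by Lemma~\ref{lem:cont card}. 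As the image of the convex cone $C$ under a linear map, $K_1$ is a convex cone.

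The central device will be a weak*-continuous linear functional that recovers the total mass of $\xi$ from $\Phi(\xi)$. Here I would use Lemma~\ref{lem:bounded and surjective}: since $A_t$ is surjective, there is a kernel $K_0 \in \Ccal(I_t \times I_t)_{\mathrm{sym}}$ with $A_t K_0$ equal to the constant function $1$ on $I_{2t}$. Setting $\ell(\mu, a) = \langle K_0, \mu \rangle$ then gives $\ell(\Phi(\xi)) = \langle K_0, A_t^*\xi \rangle = \langle A_t K_0, \xi \rangle = \xi(I_{2t})$ for every $\xi$. This $\ell$ is strictly positive on $K_1 \setminus \{0\}$: if $\Phi(\xi) = 0$ with $\xi \in C$, then $A_t^*\xi = 0 \in \mathcal{M}(I_t \times I_t)_{\succeq 0}$ and $\xi(\{\emptyset\}) = 0$, so Lemma~\ref{lem:trivial intersection} forces $\xi = 0$; hence a nonzero element of $K_1$ arises from a nonzero positive measure $\xi$, for which $\ell(\Phi(\xi)) = \xi(I_{2t}) > 0$. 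Strict positivity of $\ell$ immediately yields that $K_1$ contains no line and is therefore pointed, and it makes the slice $B = \{y \in K_1 : \ell(y) = 1\}$ a convex base: every nonzero $y \in K_1$ is the unique positive multiple $y = \ell(y)\,(y/\ell(y))$ of the element $y/\ell(y) \in B$.

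The hard part, where the argument really concentrates, will be compactness of $B$. I would identify $B = \Phi(P)$, where $P = \{\xi \in C : \xi(I_{2t}) = 1\}$ is the set of probability measures in $C$, using that $\ell(\Phi(\xi)) = 1$ exactly when $\xi(I_{2t}) = 1$. The set of Radon probability measures on the compact Hausdorff space $I_{2t}$ (compact by Lemma~\ref{lem:compact I}) is weak*-compact by Banach--Alaoglu, being a weak*-closed subset of the unit ball of $\Ccal(I_{2t})^* = \mathcal{M}(I_{2t})$; intersecting with the weak*-closed constraint $\xi(\{\emptyset\}) = 0$, which is closed because $1_{\{\emptyset\}} = 1_{I_{=0}}$ is continuous by Lemma~\ref{lem:cont card}, shows that $P$ is weak*-compact. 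Consequently $B = \Phi(P)$ is the continuous image of a compact set and hence compact. With $K_1$ pointed and possessing the compact convex base $B$, the Klee--Dieudonn\'e theorem gives that $K_1$ is closed and locally compact.
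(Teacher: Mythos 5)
Your proof is correct, and its skeleton is the same as the paper's: both produce a compact convex base for $K_1$ and invoke the Klee--Dieudonn\'e characterization, both obtain compactness from Banach--Alaoglu applied to the probability measures in $\mathcal{M}(I_{2t})_{\geq 0}$ vanishing at $\{\emptyset\}$ (your $P$ is exactly the set the paper calls $B$, and your base $\Phi(P)$ is its image), and both rest on Lemma~\ref{lem:bounded and surjective}. The genuine difference is how the base property is certified. The paper uses that lemma in adjoint form: surjectivity of $A_t$ makes $A_t^*$, and hence $\Phi$, injective, so the image of a compact convex base of the measure cone is again a base. You use surjectivity directly: a preimage $K_0$ of the constant function $1_{I_{2t}}$ under $A_t$ yields a weak*-continuous linear functional $\ell$ with $\ell(\Phi(\xi)) = \xi(I_{2t})$, strictly positive on $K_1 \setminus \{0\}$, and slicing at $\{\ell = 1\}$ gives uniqueness of the representation with no injectivity at all. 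Your route has the further advantage that pointedness of $K_1$ --- which the theorem of Klee and Dieudonn\'e, as quoted in the paper, formally requires, and which the paper's proof leaves implicit --- is immediate from strict positivity of $\ell$. One small redundancy: the appeal to Lemma~\ref{lem:trivial intersection} is not needed for strict positivity, since a nonzero element of $K_1$ comes from a nonzero $\xi \in C$ simply because $\Phi(0) = 0$, and then $\ell(\Phi(\xi)) = \xi(I_{2t}) = \|\xi\| > 0$; what you prove there is injectivity of $\Phi$ on $C$, which your argument never actually uses.
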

\begin{proof}
Set
\[
B = \{ \xi \in \mathcal{M}(I_{2t})_{\geq 0} : \langle 1_{I_{2t}}, \xi
\rangle = 1, \, \langle 1_\emptyset, \xi \rangle = 0\}.
\]
The maps
\[
\mathcal{M}(I_{2t}) \to \R, \; \xi \mapsto \langle 1_{I_{2t}}, \xi
\rangle \quad \text{and} \quad \mathcal{M}(I_{2t}) \to \R, \; \xi
\mapsto \langle 1_\emptyset, \xi \rangle
\] 
are continuous, so the preimage of $\{1\}$ under the first map and the
preimage of $\{0\}$ under the second map is closed. Hence, $B$ is
closed in the space of probability measures on $I_{2t}$, which is
compact by the Banach-Alaoglu theorem. So, $B$ is compact as well.

By Lemma~\ref{lem:bounded and surjective} $A_t^*$ is injective, so the
map $\xi \mapsto (A_t^*\xi, \xi(I_{=1}))$ is injective and the image
of $B$ under this map is a compact convex base for $K_1$. Hence, by
Klee, Dieudonn\'e, the cone $K_1$ is closed and locally compact.
\qed
\end{proof}

\begin{remark}
  In this remark we show that for infinite graphs the cone $K_2$ is not locally
  compact, and hence it is important that only one of the two cones is
  required to be locally compact in condition (C). If $V$ is an
  infinite set, then so is $I_t$, which means that $\mathcal{M}(I_t)$
  is an infinite dimensional (Hausdorff) topological vector space
  which is therefore not locally compact. The Banach-Alaoglu theorem
  says that the closed ball of radius $r$ centered about the origin in
  $\mathcal{M}(I_t)$ is compact. This means that it cannot be a
  neighborhood of the origin. Thus, for each $r > 0$ there exists a
  net $\{\lambda_\beta\} \subseteq \mathcal{M}(I_t)$ converging to the
  origin, such that $\|\lambda_\beta\| = r$ for all $\beta$.

  Let $f \in \Ccal(I_t \times I_t)_\mathrm{sym}$ and $\epsilon >
  0$. The set
\[
\mathrm{span}\{c\, g \otimes g : c \in \R, \, g \in \Ccal(I_t)\}
\]
is a point separating and nowhere vanishing subalgebra of $\Ccal(I_t
\times I_t)_\mathrm{sym}$, so it follows from the Stone-Weierstrass
theorem that it is dense in the uniform topology. This means that
there exists a function $\tilde{f} = \sum_{i=1}^m c_i g_i \otimes g_i$
such that $\|\tilde{f} - f\|_\infty \leq \epsilon / r^2$. Then,
\begin{align*}
|\lambda_\beta \otimes \lambda_\beta(f)| 
&\leq | \lambda_\beta \otimes \lambda_\beta(f) - \lambda_\beta \otimes \lambda_\beta(\tilde{f})| + |\lambda_\beta \otimes \lambda_\beta(\tilde{f})|\\
&\leq \| \lambda_\beta \otimes \lambda_\beta \| \|f - \tilde{f}\|_\infty + \sum_{i=1}^m c_i  \lambda_\beta(g_i)^2 \to \epsilon.
\end{align*}
So, the net $\{\lambda_\beta \otimes \lambda_\beta\}$ in
$\mathcal{M}(I_t \times I_t)_{\succeq 0}$, which satisfies
$\|\lambda_\beta \otimes \lambda_\beta\|=r^2$ for each $\beta$,
converges to the origin. Therefore, none of the closed balls centered
about the origin is a neighborhood of the origin in $\mathcal{M}(I_t
\times I_t)_{\succeq 0}$. Since compact sets are bounded, this means
that the origin does not have a compact neighborhood in
$\mathcal{M}(I_t \times I_t)_{\succeq 0}$, so this cone is not locally
compact and neither is $K_2$.
\end{remark}

\section{Convergence to the independence number; Proof of
  Theorem~\ref{thm:finite convergence}}
\label{sec:finite convergence}

In this section we prove Theorem~\ref{thm:finite convergence}: The
chain~\eqref{eq:chain} converges to the independence
number~$\alpha(G)$. 

Our proof can be seen as an infinite-dimensional version of Laurent's
proof of the convergence of the hierarchy for finite
graphs~$G = (V,E)$. In~\cite{Laurent2003a} she makes use of the fact that the cone
of positive semidefinite moment matrices where rows and columns are
indexed by the power set $2^V$ is a simplicial polyhedral cone; an
observation due to Lindstr\"om \cite{Lindstroem1969a} and Wilf
\cite{Wilf1968a}. More specifically, 
\begin{equation}
\label{eq:momentcone}
\{M \in \R^{2^V \times 2^V} \hspace{-0.1em}: M \succeq 0, M \text{ is a moment matrix}\} = \cone\{\chi_S \chi_S^{\sf T} : S \subseteq V\},
\end{equation}
where a moment matrix $M$ is a matrix where the entry $M_{J,J'}$ only
depends on the union $J \cup J'$ and where the vector $\chi_S \in
\R^{2^V}$ is defined componentwise by
\[
\chi_S(R) = 
\begin{cases}
1 & \text{if $R \subseteq S$},\\
0 & \text{otherwise.}
\end{cases}
\]

The proof of~\eqref{eq:momentcone} uses the inclusion-exclusion
principle. In our proof the following form of the
inclusion-exclusion principle will be crucial: Given finite sets $A$
and $C$,
\begin{align*}
\sum_{B : A \subseteq B \subseteq C} (-1)^{|B|} &= (-1)^{|A|} \sum_{B \subseteq C\setminus A} (-1)^{|B|}\\
&= (-1)^{|A|} \sum_{i=0}^{|C \setminus A|} \binom{|C \setminus A|}{i} 1^{|C \setminus A| - i} (-1)^i\\
&= (-1)^{|A|} (1 - 1)^{|C \setminus A|} = \begin{cases}
(-1)^{|A|}& \text{if } A = C,\\
0& \text{otherwise.}
\end{cases}
\end{align*}

In our proof we are also faced with two analytical difficuties because we consider infinite graphs: 1. The
cone $\{A_t^* \lambda : \lambda \in \mathcal{M}(I_{2t})\} \cap
\mathcal{M}(I_t \times I_t)_{\succeq 0}$ is not finitely
  generated. 2. Also the power set $2^V$ is too large.

  The second problem we solve by considering the set $I =
  I_{\alpha(G)}$ instead of $2^V$. In fact, already when we defined
  the hierarchy we used measures on independent sets instead of
  measures on subsets of the vertices.

  The first problem we solve by using weak vector valued integrals
  (as discussed in for instance \cite[Appendix 3]{Folland1995})
  instead of finite conic combinations: Let $\tau \in \mathcal{M}(I)$
  and $\nu_S \in \mathcal{M}(I)$ so that $S \mapsto \nu_S$ is a
  continuous map from $I$ to $\mathcal{M}(I)$ with $\sup_{S \in I}
  \|\nu_S\| < \infty$. Then $f \mapsto \int \nu_S(f) \, d\tau(S)$ is a
  bounded linear map on $\Ccal(I)$, and hence defines a unique signed
  Radon measure $\nu$ on $I$ which we denote by $\nu = \int \nu_S \,
  d\tau(S)$. The point measures
\[
\delta_S \quad \text{and} \quad \chi_R = \sum_{Q
    \subseteq R} \delta_Q
\]
which we will use in the next proposition satisfy the above
conditions, so we can use them as integrants in vector valued
integrals.

Now the proof of Theorem~\ref{thm:finite convergence} will follow
immediately from the following proposition.

\begin{proposition}
\label{prop:signed measure}
Let $G$ be a compact topological packing graph and suppose $\lambda$
is feasible for $\mathrm{las}_{\alpha(G)}(G)$. Then there exists a
unique probability measure 
\[
\sigma \in \mathcal P(I) = \{ \lambda \in
\mathcal{M}(I)_{\geq 0} : \|\lambda\| = 1 \}
\]
such that
\[
\lambda =  \int \chi_R\, d\sigma(R).
\] 
\end{proposition}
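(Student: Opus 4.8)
The plan is to invert the linear map behind the desired representation and then use the moment condition to force the inverting measure to be positive. I would introduce the bounded operator
\[
\Gamma \colon \Ccal(I) \to \Ccal(I), \quad \Gamma f(R) = \chi_R(f) = \sum_{Q \subseteq R} f(Q),
\]
which is bounded, with $\|\Gamma f\|_\infty \le 2^{\alpha(G)}\|f\|_\infty$, and continuous because $R \mapsto \chi_R$ is weak* continuous and $|R| \le \alpha(G)$ on $I$ by Lemma~\ref{lem:cont card}. By the inclusion--exclusion identity recalled above, $\Gamma$ is a bijection whose inverse is the Möbius operator $\Gamma^{-1}h(R) = \psi_R(h)$ with $\psi_R = \sum_{Q \subseteq R}(-1)^{|R|-|Q|}\delta_Q$; here $R \mapsto \psi_R$ is weak* continuous and norm bounded by the same hyperspace argument as in Lemma~\ref{lem:cont card} and \cite{Handel2000}, so it is admissible in a weak vector-valued integral. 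Since $\big(\int \chi_R\,d\sigma(R)\big)(f) = \sigma(\Gamma f)$, the representation map $\sigma \mapsto \int \chi_R\,d\sigma(R)$ is precisely the adjoint $\Gamma^*$, hence a weak*-continuous bijection of $\mathcal{M}(I)$. This would immediately yield both existence and uniqueness of a signed measure $\sigma$ with $\int \chi_R\,d\sigma(R) = \lambda$, namely $\sigma = (\Gamma^{-1})^*\lambda = \int \psi_R\,d\lambda(R)$, so that $\sigma(f)=\lambda(\Gamma^{-1}f)$; what then remains is to check that $\sigma$ is a probability measure.

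The crux is the positivity $\sigma \ge 0$, which is the infinite-dimensional substitute for the finite simplicial-cone argument behind \eqref{eq:momentcone}. Using $\lambda = \int \chi_R\,d\sigma(R)$ together with the re-indexing $\sum_{Q \subseteq R}\sum_{J \cup J' = Q} = \sum_{J, J' \subseteq R}$, I would compute, for every $g \in \Ccal(I)$,
\[
\chi_R\big(A_t(g \otimes g)\big) = \sum_{J, J' \subseteq R} g(J) g(J') = \Big(\sum_{J \subseteq R} g(J)\Big)^2 = (\Gamma g(R))^2,
\]
and hence $\lambda\big(A_t(g \otimes g)\big) = \sigma\big((\Gamma g)^2\big)$. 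Since $A_t^*\lambda \in \mathcal{M}(I \times I)_{\succeq 0}$ and $g \otimes g \in \Ccal(I \times I)_{\succeq 0}$, the left-hand side is nonnegative, so $\sigma\big((\Gamma g)^2\big) \ge 0$ for all $g$. Because $\Gamma$ is surjective, $\Gamma g$ ranges over all of $\Ccal(I)$, giving $\sigma(h^2) \ge 0$ for every $h \in \Ccal(I)$. Finally, every nonnegative $f \in \Ccal(I)$ equals $(\sqrt{f}\,)^2$ with $\sqrt{f} \in \Ccal(I)$, so $\sigma(f) \ge 0$, and therefore $\sigma$ is a positive measure.

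To normalize, note that since $\sigma \ge 0$ one has $\|\sigma\| = \sigma(\mathbbm{1})$, and by the inclusion--exclusion identity $\Gamma^{-1}\mathbbm{1} = 1_{\{\emptyset\}}$, the indicator of the point $\emptyset$, which is open and closed in $I$ by Lemma~\ref{lem:cont card}. Hence
\[
\|\sigma\| = \sigma(\mathbbm{1}) = \lambda\big(\Gamma^{-1}\mathbbm{1}\big) = \lambda\big(1_{\{\emptyset\}}\big) = \lambda(\{\emptyset\}) = 1,
\]
so $\sigma \in \mathcal{P}(I)$. Combined with the identity $\int \chi_R\,d\sigma(R) = \lambda$ and the uniqueness coming from the bijectivity of $\Gamma^*$, this would prove the proposition.

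The two genuinely delicate points are the weak* continuity of $R \mapsto \psi_R$, needed so that $\sigma$ is a bona fide Radon measure defined by a vector-valued integral (handled, as for $\chi_R$, via the hyperspace topology and the topological packing graph condition exactly as in Lemma~\ref{lem:cont card}), and the positivity step. I expect the positivity to be the main obstacle: the passage from ``$\sigma$ is nonnegative on the squares $(\Gamma g)^2$'' to ``$\sigma$ is nonnegative on all nonnegative continuous functions'' is what makes the argument work, and it relies essentially on the surjectivity of the Möbius operator $\Gamma$ together with the fact that on the compact Hausdorff space $I$ every nonnegative continuous function has a continuous square root.
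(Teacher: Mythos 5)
Your proof is correct, and its mathematical core coincides with the paper's: both arguments run on M\"obius inversion between $\delta_S$ and $\chi_R$, both prove positivity by pairing $A_{\alpha(G)}^*\lambda$ with a rank-one kernel so that the pairing becomes $\int\big(\sum_{Q\subseteq R}f(Q)\big)^2\,d\sigma(R)$, and both normalize via $\lambda(\{\emptyset\})=1$. The genuine difference is organizational. You promote the zeta transform to a bounded operator $\Gamma$ on $\Ccal(I)$ with bounded M\"obius inverse and recognize the representation map $\sigma\mapsto\int\chi_R\,d\sigma(R)$ as its adjoint $\Gamma^*$; invertibility of $\Gamma$ then delivers existence and uniqueness of the signed measure in one stroke. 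The paper instead constructs $\sigma$ by hand, as the bounded functional $f\mapsto\int\sum_{R\subseteq S}(-1)^{|S\setminus R|}f(R)\,d\lambda(S)$, and proves uniqueness by a separate descent: evaluating $\int\chi_R\,d(\sigma-\sigma')(R)=0$ on Borel subsets of $I_{=t}$ for $t=\alpha(G),\alpha(G)-1,\ldots,0$. Your adjoint-bijectivity argument subsumes that descent, a tidy simplification; conversely, your positivity step (``$\sigma((\Gamma g)^2)\ge 0$ for all $g$, $\Gamma$ surjective, take $h=\sqrt{f}$'') is exactly the paper's step with the explicit preimage $f=\Gamma^{-1}\sqrt{g}$ hidden behind the word ``surjective''. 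Two points that you should make explicit in a final version, since both proofs lean on them: (i) the re-indexing $\sum_{Q\subseteq R}\sum_{J\cup J'=Q}=\sum_{J,J'\subseteq R}$ is valid only because $t=\alpha(G)$, so that every $J\subseteq R$ automatically lies in $I_t$ (the paper flags this as being ``in the final step of the hierarchy''); and (ii) the weak* continuity of $R\mapsto\psi_R$, which you need so that $\Gamma^{-1}$ really maps $\Ccal(I)$ into itself, requires the disjoint-open-cliques argument of Lemma~\ref{lem:cont card}, just as the continuity of $R\mapsto\chi_R$ does --- the paper asserts this continuity at the same level of detail, so you are on equal footing there.
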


\begin{proof}
\textsl{Existence:}
We have 
\[
\lambda = \int \delta_S \,d\lambda(S) = \int \sum_{R \subseteq S} (-1)^{|S \setminus R|} \chi_R \, d\lambda(S),
\]
because by the inclusion-exclusion principle 
\[
\sum_{R \subseteq S} (-1)^{|S \setminus R|} \chi_R = \sum_{R \subseteq S} (-1)^{|S \setminus R|} \sum_{Q \subseteq R} \delta_Q = \sum_{Q \subseteq S} \delta_Q \sum_{R : Q \subseteq R \subseteq S} (-1)^{|S \setminus R|} = \delta_S.
\]
The image of $f \in \Ccal(I)$ under the linear map
\[
\Ccal(I) \to \R, \; f \mapsto \int \sum_{R \subseteq S} (-1)^{|S \setminus R|} f(R)\, d\lambda(S)
\]
has norm at most $2^{\alpha(G)} \|\lambda\| \|f\|_\infty$, so the above linear functional is bounded and hence defines a signed Radon measure $\sigma$ on $I$. Then 
\[
\int \chi_R(f)\, d\sigma(R) = \int \sum_{R \subseteq S} (-1)^{|S \setminus R|} \chi_R(f)\, d\lambda(S) = \lambda(f),
\]
for each $f \in \Ccal(I)$, so $\lambda =  \int \chi_R\, d\sigma(R)$.

\medskip 

\textsl{Uniqueness:} If $\sigma' \in \mathcal M(I_{2t})$ is another measure such that $\lambda = \int \chi_R\, d\sigma'(R)$, then $\int \chi_R \,d(\sigma - \sigma')(R) = 0$. Evaluating the above measure at a Borel set $L \subseteq I_{=t}$ with $t = \alpha(G)$ gives 
\[0 = \int \chi_R(L) \,d(\sigma - \sigma')(R) = (\sigma - \sigma')(L),\] so $\sigma|_{I_{=t}} = \sigma'|_{I_{=t}}$. Repeating this argument for $t = \alpha(G)-1, \ldots, 1, 0$ shows $\sigma = \sigma'$, which shows that $\sigma$ is unique.

\medskip 

\textsl{Positivity:} Let $g \in \Ccal(I)_{\geq 0}$ be arbitrary and define $f \in \Ccal(I)$ by
\[
f(Q) = \sum_{P \subseteq Q} (-1)^{|Q \setminus P|} \sqrt{g(P)},
\]
so that
\begin{align*}
\sum_{Q \subseteq R} f(Q)
&= \sum_{Q \subseteq R} \sum_{P \subseteq Q} (-1)^{|Q \setminus P|} \sqrt{g(P)}\\
&= \sum_{P \subseteq R} (-1)^{|P|} \sqrt{g(P)} \sum_{Q : P \subseteq Q \subseteq R} (-1)^{|Q|}
= \sqrt{g(R)}.
\end{align*}
We have
\[
0 \leq \langle f \otimes f, A_{\alpha(G)}^* \lambda \rangle = \langle A_{\alpha(G)} f \otimes f, \lambda \rangle,
\]
and since $\lambda = \int \chi_R \, d\sigma(R)$, the right hand side above is equal to
\[
\int \sum_{Q \subseteq R} A_{\alpha(G)}(f \otimes f)(Q) \,d\sigma(R).
\]
Since we are in the final step of the hierarchy, we have that $A_{\alpha(G)}(f \otimes f)(Q)$ can be written as $\sum_{J \cup J' = Q}f(J)f(J'),$ so the above equals
\[
\int \sum_{Q \subseteq R}\; \sum_{J\cup J' = Q} f(J)f(J')\,d\sigma(R)
= \int \left(\sum_{Q \subseteq R} f(Q)\right)^2 \, d\sigma(R) =
\int g(R) \, d\sigma(R),
\]
which shows that $\sigma$ is a positive measure.

\medskip

\textsl{Normalization:} $\sigma$ is a probability measure, because 
\[
1 = \lambda(\{\emptyset\}) = \int \chi_S(\{\emptyset\})\, d\sigma(S) = \|\sigma\|.
\]
\qed
\end{proof}

\begin{proposition}
Let $G$ be a compact topological packing graph. Then the extreme points of the feasible region of $\mathrm{las}_{\alpha(G)}(G)$ are precisely the measures $\chi_R$ with $R \in I$.
\end{proposition}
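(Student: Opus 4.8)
The plan is to identify the feasible region of $\mathrm{las}_{\alpha(G)}(G)$ with the set $\mathcal{P}(I)$ of probability measures on the compact space $I$ by means of the map $\Phi \colon \sigma \mapsto \int \chi_R \, d\sigma(R)$, and then to transport the known extreme point structure of $\mathcal{P}(I)$ along this identification. First I would record that for $t = \alpha(G)$ we have $I_{2t} = I$, since every independent set has at most $\alpha(G)$ elements; hence the optimization variable of $\mathrm{las}_{\alpha(G)}(G)$ ranges over $\mathcal{M}(I)_{\geq 0}$, and the feasible region $F$ is a convex subset of $\mathcal{M}(I)$.

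Next I would show that $\Phi$ is an affine bijection from $\mathcal{P}(I)$ onto $F$. That $\Phi$ is affine, indeed linear, is immediate from the definition of the vector valued integral. Proposition~\ref{prop:signed measure} supplies both surjectivity onto $F$ and the injectivity we need: every feasible $\lambda$ admits a representing measure, and this representing measure is unique. The one point still requiring verification is that $\Phi$ maps \emph{into} $F$, that is, that $\int \chi_R \, d\sigma(R)$ is feasible for every $\sigma \in \mathcal{P}(I)$. For this I would note that each $\chi_R$ is itself feasible, as already exhibited in Section~\ref{ssec:generalization}, that $F$ is convex and weak* closed, and that $\int \chi_R \, d\sigma(R)$ lies in the closed convex hull of $\{\chi_R : R \in I\}$; alternatively one verifies the two feasibility constraints directly, the normalization from $\chi_R(\{\emptyset\}) = 1$ and the positive semidefiniteness exactly as in the positivity part of the proof of Proposition~\ref{prop:signed measure}.

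Once $\Phi$ is an affine bijection, it carries extreme points to extreme points in both directions, since a linear injection satisfies $\Phi(a) = s\,\Phi(b) + (1-s)\Phi(c)$ if and only if $a = s\,b + (1-s)c$. I would then invoke the standard fact that the extreme points of $\mathcal{P}(I)$, the probability measures on the compact Hausdorff space $I$, are exactly the Dirac measures $\delta_R$ with $R \in I$. Since $\Phi(\delta_R) = \int \chi_Q \, d\delta_R(Q) = \chi_R$, the extreme points of $F$ are precisely the measures $\chi_R$ with $R \in I$, which is the assertion.

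I expect the main obstacle to be the verification that $\Phi$ maps $\mathcal{P}(I)$ into $F$, rather than merely onto it set-theoretically: Proposition~\ref{prop:signed measure} is phrased starting from a feasible $\lambda$, so it does not by itself guarantee that an arbitrary $\sigma \in \mathcal{P}(I)$ produces a feasible measure. Pinning this down, either through the closed-convex-hull argument or by re-running the positivity computation, is the step that needs care; everything else is a clean transport of the extreme point structure of the space of probability measures along an affine isomorphism.
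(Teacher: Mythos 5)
Your proposal is correct and follows essentially the same route as the paper: the paper also establishes that $L\colon \sigma \mapsto \int \chi_R\, d\sigma(R)$ is a bijection from $\mathcal{P}(I)$ onto the feasible set (injectivity and surjectivity from Proposition~\ref{prop:signed measure}, feasibility of the image by direct verification of the normalization and the positive-definiteness constraint), and then transports $\mathrm{ex}(\mathcal{P}(I)) = \{\delta_S : S \in I\}$ through $L$ to get $\{\chi_R : R \in I\}$. The only cosmetic difference is that the paper carries out what you call the ``alternative'' direct feasibility computation (which is really the computation from Section~\ref{ssec:generalization} integrated against $\sigma$, rather than the positivity part of Proposition~\ref{prop:signed measure}), instead of your closed-convex-hull/barycenter argument.
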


\begin{proof}
If $\sigma \in \mathcal{P}(I)$ and $\lambda = \int \chi_R \,d\sigma(R)$, then
\[
\lambda(\{\emptyset\}) = \int \chi_R(\{\emptyset\})\, d\sigma(R) = 1, 
\]
and for each $K \in \Ccal(I \times I)_{\succeq 0}$ we have
\[
\langle K, A_{\alpha(G)}^* \lambda \rangle = \int \chi_R(A_{\alpha(G)} K) \,d\sigma(R) = \int \sum_{J, J' \subseteq R} K(J, J') \, d\sigma(R) \geq 0,
\]
so $\lambda$ is feasible for $\mathrm{las}_{\alpha(G)}(G)$.
So we have the surjective linear map
\[
L \colon \mathcal{P}(I) \to \mathcal F, \; \sigma \mapsto \int \chi_R \,d\sigma(R),
\]
where $\mathcal F$ denotes the feasible set of $\mathrm{las}_{\alpha(G)}(G)$.
By Proposition~\ref{prop:signed measure} the map $L$ is also injective. This means that 
\[
\mathrm{ex}(\mathcal F) = \mathrm{ex}(L(\mathcal{P}(I))) = L(\mathrm{ex}(\mathcal{P}(I)))
\]
and since $\mathrm{ex}(\mathcal{P}(I)) = \{\delta_S : S \in I\}$ (see
for instance Barvinok \cite[Proposition 8.4]{Barvinok2002}), the right hand side above is equal to $ L(\{\delta_S : S \in I\}) = \{ \chi_R : R \in I \}$.
\qed
\end{proof}

\begin{theopargself}
\begin{proof}[ of Theorem~\ref{thm:finite convergence} ]
Let $\lambda$ be feasible for $\mathrm{las}_{\alpha(G)}(G)$. By Proposition~\ref{prop:signed measure} there exists a probability measure $\sigma$ on $I$ such that $\lambda = \int \chi_S \, d\sigma(S)$. Substituting this integral for $\lambda$ in the definition of $\mathrm{las}_{\alpha(G)}(G)$ gives
\[
\mathrm{las}_{\alpha(G)}(G) \leq \max  \Big\{ \int \underbrace{\chi_R(I_{=1})}_{|R|} d\sigma(R) : \sigma \in \mathcal{P}(I) \Big\} = \alpha(G), 
\]
and since we already know that $\mathrm{las}_{\alpha(G)}(G) \geq \alpha(G)$, this completes the proof. \qed
\end{proof}
\end{theopargself}

\section{Two and three-point bounds}
\label{sec:n-point bounds}

\subsection{Two-point bounds}

The Lov\'asz $\vartheta$-number is a two-point bound originally
defined for finite graphs. Bachoc, Nebe, Oliveira, and
Vallentin~\cite{BachocNebeOliveiraVallentin2009} generalized this to
the spherical code graph, and they showed that it is equivalent to the
linear programming bound of Delsarte, Goethals, and
Seidel~\cite{DelsarteGoethalsSeidel1977}. The following generalization
of the $\vartheta'$-number for compact topological packing graphs $G$
is natural:
\begin{align*}
\vartheta'(G)^* = \inf \Big\{ a : \;& a \in \R, \, F \in \Ccal(V \times V)_{\succeq 0}, \\[-0.4ex]
& F(x, x) \leq a - 1\text{ for } x \in V,\\[-0.6ex]
& F(x, y) \leq -1 \text{ for } \{x, y\} \in I_{=2} \Big\}.
\end{align*}

\begin{lemma}
\label{lem:theta is feasible}
Let $G$ be a compact topological packing graph. Then $\vartheta'(G)^*$
has a feasible solution.
\end{lemma}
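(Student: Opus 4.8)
The plan is to exhibit a single feasible pair $(a,F)$ explicitly. The only real content is the kernel constraint $F(x,y) \le -1$ for distinct non-adjacent pairs together with positive definiteness; once such an $F \in \Ccal(V \times V)_{\succeq 0}$ is found, the diagonal constraints hold by simply setting $a = 1 + \max_{x \in V} F(x,x)$, which is finite since $F$ is continuous and $V$ is compact. The point to appreciate is that in the finite case one would take $F = |V|\,\mathrm{Id} - \mathbbm{1}\mathbbm{1}^{\tsp}$, but here the diagonal indicator is not continuous, so a naive translate of the identity is unavailable. This is exactly where the topological packing graph structure enters: since every vertex lies in an open clique and $V$ is compact, there is a finite cover $U_1, \dots, U_N$ of $V$ by open cliques, and since a compact Hausdorff space is normal, there is a partition of unity $f_1, \dots, f_N$ subordinate to it, with each $f_i \ge 0$ continuous, $\sum_i f_i = 1$, and $\{f_i > 0\} \subseteq U_i$.

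The key step is to build $F$ from this partition of unity so that its off-diagonal values on non-edges become negative. First I would record the crucial consequence of the clique-support property: if $x$ and $y$ are distinct and non-adjacent, then $f_i(x)f_i(y) = 0$ for every $i$, since $f_i(x)f_i(y) > 0$ would force $x,y \in U_i$ and hence $x = y$ or $x$ adjacent to $y$. Next, let $s_1, \dots, s_N$ be the vertices of a regular simplex centered at the origin of $\R^{N-1}$, normalized so that $\langle s_i, s_i\rangle = 1$ and $\langle s_i, s_j\rangle = -1/(N-1)$ for $i \neq j$, and define the continuous map $\Phi \colon V \to \R^{N-1}$ by $\Phi(x) = \sqrt{N-1}\,\sum_{i=1}^N \sqrt{f_i(x)}\,s_i$. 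Then I set $F(x,y) = \langle \Phi(x), \Phi(y)\rangle$; this $F$ is continuous and lies in $\Ccal(V \times V)_{\succeq 0}$ automatically, because any Gram kernel satisfies $\sum_{k,l} c_k c_l F(x_k,x_l) = \|\sum_k c_k \Phi(x_k)\|^2 \ge 0$.

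It then remains to verify the non-edge constraint. For distinct non-adjacent $x,y$ the diagonal simplex terms drop out because $f_i(x)f_i(y) = 0$, leaving
\[
F(x,y) = (N-1)\sum_{i \neq j} \sqrt{f_i(x) f_j(y)}\,\langle s_i, s_j\rangle = -\Big(\sum_i \sqrt{f_i(x)}\Big)\Big(\sum_j \sqrt{f_j(y)}\Big),
\]
where I used $\langle s_i, s_j\rangle = -1/(N-1)$ and, once more, that the $i=j$ terms vanish. Since $\sqrt{f_i} \ge f_i$ yields $\sum_i \sqrt{f_i(x)} \ge \sum_i f_i(x) = 1$, both factors are at least $1$, so $F(x,y) \le -1$, as required; the diagonal constraint then holds with $a = 1 + \max_{x \in V} F(x,x)$. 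The degenerate case $N = 1$, where $V$ is a single clique and $I_{=2} = \emptyset$, is trivial and handled by $F = 0$, $a = 1$. I expect the main obstacle to be conceptual rather than computational: one must recognize that continuity rules out the usual identity-based construction, and that the open-clique cover is precisely the tool that both produces a continuous ``diagonal substitute'' and forces it to vanish on non-edges; the simplex vectors are then merely a convenient device to convert this vanishing into strict negativity while preserving positive definiteness.
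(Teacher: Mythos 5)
Your proof is correct, and while it shares the paper's overall strategy, the kernel construction is genuinely different. Both arguments start identically: compactness together with the topological packing graph condition gives a finite cover of $V$ by open cliques, normality of the compact Hausdorff space $V$ supplies continuous functions adapted to that cover, and $F$ is then a finite-rank Gram kernel, so membership in $\Ccal(V\times V)_{\succeq 0}$ is automatic; both proofs also hinge on the same key observation that two distinct non-adjacent vertices can never lie in a common clique of the cover. The mechanisms for forcing $F(x,y)\leq -1$ on non-edges diverge, however. The paper shrinks each open clique $C_x$ to a smaller open set $U_x$ and uses Urysohn functions taking the value $|S|$ on $U_x$ and $-1$ off $C_x$; on a non-edge this produces one term equal to $-|S|$ which must be balanced against up to $|S|-1$ positive terms of size at most $1$, yielding the bound $-1$ and the explicit value $a=|S|^3+1$. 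You instead keep all functions nonnegative (a partition of unity supported inside the cliques), so on a non-edge every same-index product $f_i(x)f_i(y)$ vanishes exactly, and the negativity is injected purely through the mutually negative inner products of the regular simplex vectors; this yields the clean closed formula $F(x,y)=-\bigl(\sum_i\sqrt{f_i(x)}\bigr)\bigl(\sum_j\sqrt{f_j(y)}\bigr)\leq -1$ with no error terms to balance, since $\sum_i\sqrt{f_i}\geq\sum_i f_i=1$. The price is the extra device of the simplex embedding, the square roots (harmless, as $\sqrt{\cdot}$ is continuous on $[0,\infty)$), a non-explicit $a=1+\max_x F(x,x)$ (finite by compactness), and the separate treatment of the degenerate case $N=1$, which you correctly handle with $F=0$, $a=1$.
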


For finite graphs one can show $\vartheta'(G)^*$ admits a feasible
solution by selecting a matrix $F$ with $F(x, y) = -1$ for $\{x, y\}
\in I_{=2}$ and the diagonal of $F$ large enough so as to make it
diagonally dominant and hence positive semidefinite. For infinite
graphs it is not clear how to adapt this argument, so we use a different
approach.

\begin{theopargself}
\begin{proof}[of Lemma~\ref{lem:theta is feasible} ]
  By the topological packing graph condition there is for each $x \in
  V$ an open clique $C_x$ containing $x$. Since $V$ is a compact
  Hausdorff space, it is a normal space, so there exists an open
  neighborhood $U_x$ of $x$ such that its closure does not intersect
  $V \setminus C_x$. By compactness there exists an $S \subseteq V$
  such that $\{U_x : x \in S\}$ is a finite open cover of $V$.  By
  Urysohn's lemma there is a function $f_x \in \Ccal(V)$ such that
\[
f_x(y) \begin{cases}
= |S| & \text{ if } y \in U_x,\\
\in [-1, |S|] & \text{ if } y \in C_x\setminus U_x,\\
= -1 & \text{ if } y \in V \setminus C_x.     
\end{cases}
\]
Define
\[
F \in \Ccal(V \times V)_{\succeq 0} \text{ by } F = \sum_{x \in S}
f_x \otimes f_x,\; \text{ and } a = |S|^3+1.
\]
Then, 
\[
F(y, y) = \sum_{x \in S} f_x(y)^2 \leq
|S|^3 = a - 1 \text{ for all } y \in V.
\] 
Moreover, if $\{y, y'\} \in
I_{=2}$, then at most one of $y$ and $y'$ lies in $C_x$ for every given
$x \in S$. So, $f_x(y)f_x(y') = -|S|$ if either $y$ or $y'$ lies in $U_x$
and $f_x(y)f_x(y') \leq 1$ if neither $y$ nor $y'$ lies in
$U_x$. Hence, $F(y, y') \leq -1$ for all $\{y, y'\} \in I_{=2}$, and
it follows that $(a, F)$ is feasible for $\vartheta'(G)^*$.
\qed
\end{proof}
\end{theopargself}

Now we show that the first step of our hierarchy equals the
$\vartheta'$-number for compact topological packing graphs, as it is
known for finite graphs.

\begin{theorem}
Let $G$ be a compact topological packing graph. Then 
\[
\mathrm{las}_1(G)^* = \vartheta'(G)^*.
\]
\end{theorem}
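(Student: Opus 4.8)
The plan is to prove equality by establishing both inequalities between the two minimization problems $\mathrm{las}_1(G)^*$ and $\vartheta'(G)^*$. Recall that
\[
\mathrm{las}_1(G)^* = \inf\Big\{K(\emptyset, \emptyset) : K \in \Ccal(I_1 \times I_1)_{\succeq 0},\; A_1K(S) \leq -1_{I_{=1}}(S) \text{ for } S \in I_2 \setminus \{\emptyset\}\Big\},
\]
where $I_1 = \{\emptyset\} \cup I_{=1}$ and $I_{=1}$ is homeomorphic to $V$. The core idea is that a feasible kernel $K$ for $\mathrm{las}_1(G)^*$ is nothing but a positive definite kernel on the index set $\{\emptyset\} \cup V$, which decomposes into a scalar block $K(\emptyset, \emptyset)$, a ``vector'' block $K(\emptyset, \{x\})$, and a ``matrix'' block $K(\{x\}, \{y\})$; this is exactly the data appearing in the definition of $\vartheta'(G)^*$, where the scalar is $a$ and the matrix block is $F$. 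So I would set up an explicit dictionary between the two programs and translate constraints and objective across it.

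First I would prove $\mathrm{las}_1(G)^* \leq \vartheta'(G)^*$. Given a feasible pair $(a, F)$ for $\vartheta'(G)^*$, I would build a kernel $K$ on $I_1 \times I_1$ by setting $K(\emptyset, \emptyset) = a$, $K(\emptyset, \{x\}) = K(\{x\}, \emptyset) = 0$, and $K(\{x\}, \{y\}) = F(x,y)$, viewing $V \cong I_{=1}$. Positive definiteness of $K$ on $I_1 \times I_1$ follows because the $\emptyset$-row and $\emptyset$-column are zero off the diagonal and $F$ is positive definite; continuity is clear since $I_{=1}$ is both open and closed in $I_1$ by Lemma~\ref{lem:cont card}, so one can patch the constant $a$ at $\emptyset$ with $F$ on $V$ without losing continuity. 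Then I would unwind $A_1K(S)$ for each $S \in I_2 \setminus \{\emptyset\}$: for $S = \{x\}$ a singleton one gets $A_1K(\{x\}) = K(\emptyset, \{x\}) + K(\{x\}, \emptyset) + K(\{x\}, \{x\}) = F(x,x) \leq a - 1 = K(\emptyset, \emptyset) - 1$, matching the required bound $A_1K(\{x\}) \leq -1_{I_{=1}}(\{x\})$ after accounting for how the objective and constraints interact; and for $S = \{x, y\} \in I_{=2}$ one gets $A_1K(S) = 2F(x,y) \leq -2 \leq 0 = -1_{I_{=1}}(S)$. A small bookkeeping subtlety is that the $\mathrm{las}_1$ constraint at singletons reads $A_1K(\{x\}) \leq -1$, so I would need to arrange the $\emptyset$-interactions to absorb the difference between $a-1$ and $-1$; this suggests the cleaner encoding $K(\emptyset,\emptyset) = a$, $K(\emptyset,\{x\}) = -1$, $K(\{x\},\{y\}) = F(x,y) + \text{(correction)}$, and checking positive definiteness of the bordered kernel. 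Getting this border exactly right is the main place where care is needed.

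For the reverse inequality $\vartheta'(G)^* \leq \mathrm{las}_1(G)^*$, I would start from a feasible kernel $K$ for $\mathrm{las}_1(G)^*$ and extract $(a, F)$ by reversing the dictionary: set $a = K(\emptyset, \emptyset)$ and define $F$ on $V \times V$ from the singleton block of $K$, again using the positive-semidefiniteness of the full bordered kernel to control the cross terms $K(\emptyset, \{x\})$. The constraint $A_1K(\{x\}) \leq -1$ unpacks into a relation among $K(\emptyset,\emptyset)$, $K(\emptyset,\{x\})$, and $K(\{x\},\{x\})$ that should yield $F(x,x) \leq a - 1$ once the border is eliminated (for instance via a Schur-complement or completion-of-squares argument exploiting that the $2\times 2$ principal minor indexed by $\{\emptyset, \{x\}\}$ is positive semidefinite), and the $I_{=2}$ constraint gives $F(x,y) \leq -1$ directly. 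The positive definiteness of the resulting $F$ follows from that of $K$ restricted to the block indexed by $I_{=1}$.

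I expect the main obstacle to be handling the border row/column $K(\emptyset, \{x\})$ correctly: in the finite-graph setting these off-diagonal $\emptyset$-entries are precisely what let the $\vartheta'$ matrix $F$ and the scalar $a$ recombine into a single positive semidefinite moment-type matrix, and one typically uses a Schur complement to pass between ``$F \succeq 0$ with $F(x,x) \leq a-1$'' and ``the bordered kernel is positive definite.'' In the continuous setting I must make sure these manipulations respect continuity of the kernels and the topology of $I_1$; this is where Lemma~\ref{lem:cont card} (giving that $I_{=1}$ is clopen in $I_1$, so $\emptyset$ is an isolated point in a suitable sense) and Lemma~\ref{lem:theta is feasible} (guaranteeing $\vartheta'(G)^*$ is not vacuously $+\infty$) both enter. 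Once the border is dealt with, the two inequalities are symmetric and the equality follows.
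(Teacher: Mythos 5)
Your first inequality is recoverable, but the way you handle it already exposes the trap: with the zero border $K(\emptyset,\{x\})=0$ and $K(\{x\},\{y\})=F(x,y)$, the singleton constraint $A_1K(\{x\})\le -1$ is violated for \emph{every} feasible $F$, since positive definiteness forces $F(x,x)\ge 0$; your fallback encoding with $K(\emptyset,\{x\})=-1$ plus a ``correction'' is exactly what the paper does in Lemma~\ref{lem:leq}, namely $K(\{x\},\{y\})=(F(x,y)+1)/a$, with positive definiteness verified by a Schur complement against the $\emptyset$-border. The genuine gap is in the reverse inequality $\vartheta'(G)^*\le\mathrm{las}_1(G)^*$. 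Your claim that ``the $I_{=2}$ constraint gives $F(x,y)\le -1$ directly'' is false: for a feasible $K$ that constraint reads $A_1K(\{x,y\})=2K(\{x\},\{y\})\le 0$, so any $F$ obtained by restricting (or positively scaling) the singleton block satisfies only $F(x,y)\le 0$, not $\le -1$. Contrary to your closing sentence, the dictionary is \emph{not} symmetric: the forward map $(a,F)\mapsto K$ produces kernels with constant border $-1$, while a general feasible $K$ has a nonconstant border $K(\emptyset,\{x\})$, and undoing the shift-and-scale $(F+1)/a$ requires first normalizing that border. Concretely, the repair needs an idea absent from your proposal: from $K(\{x\},\{x\})\ge 0$ and the singleton constraint one gets $K(\emptyset,\{x\})\le -1/2$, so $d(x)=-1/K(\emptyset,\{x\})$ is a continuous positive function; setting $a=K(\emptyset,\emptyset)$ and $F(x,y)=a\,d(x)d(y)K(\{x\},\{y\})-1$, one checks that $F$ is positive definite (Schur complement after rescaling the border to $-1$), that $F(x,y)\le -1$ on $I_{=2}$, and that $F(x,x)\le a-1$, the last from $K(\{x\},\{x\})\le -1-2K(\emptyset,\{x\})\le K(\emptyset,\{x\})^2$, i.e.\ from $(K(\emptyset,\{x\})+1)^2\ge 0$. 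Note also that your proposed mechanism for the diagonal bound --- the $2\times 2$ minor indexed by $\{\emptyset,\{x\}\}$ --- points the wrong way: positive semidefiniteness yields \emph{lower} bounds on diagonal entries, whereas $F(x,x)\le a-1$ must come from the inequality constraint $A_1K(\{x\})\le -1$.

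You should also know that the paper proves this direction by an entirely different route, which suggests the authors did not consider the kernel-side reversal routine: it derives the dual maximization problem $\vartheta'(G)$ over measures on $I_2\setminus\{\emptyset\}$, proves strong duality for it via the Klee--Dieudonn\'e closed-cone criterion, and then maps an optimal measure $\eta$ for $\vartheta'(G)$ to a feasible measure $\lambda$ for $\mathrm{las}_1(G)$; positive definiteness of $A_1^*\lambda$ is established using a generalized Schur complement for measures (Lemma~\ref{lem:schur complement}), Cauchy--Schwarz, and a Lagrange-multiplier identity modeled on Schrijver's argument for finite graphs. If you complete your argument with the rescaling described above, you would in fact obtain a more elementary proof of Lemma~\ref{lem:geq} than the paper's; as written, however, the step from $K$ to $(a,F)$ does not go through.
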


We prove this theorem by Lemma~\ref{lem:leq} and
Lemma~\ref{lem:geq}. We first show the easy inequality.

\begin{lemma}
\label{lem:leq}
$\mathrm{las}_1(G)^* \leq \vartheta'(G)^*.$
\end{lemma}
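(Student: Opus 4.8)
The plan is to prove the inequality by taking an arbitrary feasible solution $(a, F)$ of $\vartheta'(G)^*$ and constructing from it a feasible kernel $K$ for $\mathrm{las}_1(G)^*$ whose objective value $K(\emptyset, \emptyset)$ equals $a$; since both programs are infima, this gives $\mathrm{las}_1(G)^* \leq \vartheta'(G)^*$. Since the graph comes from a nonempty container we have $V \neq \emptyset$, and then $F \succeq 0$ forces $F(x,x) \geq 0$ at any $x$, so the constraint $F(x,x) \leq a - 1$ yields $a \geq 1$; in particular $a > 0$, which lets us divide by $a$ below.

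First I would define $K \in \Ccal(I_1 \times I_1)_{\mathrm{sym}}$ by $K(\emptyset, \emptyset) = a$, by $K(\emptyset, \{x\}) = K(\{x\}, \emptyset) = -1$, and by $K(\{x\}, \{y\}) = a^{-1}(F(x, y) + 1)$. Recall that $I_{=1}$ is homeomorphic to $V$ and that, by Lemma~\ref{lem:cont card}, both $I_{=0} = \{\emptyset\}$ and $I_{=1}$ are open and closed in $I_1$; hence $I_1 \times I_1$ splits into four clopen blocks, on each of which $K$ is manifestly continuous (being constant, or assembled from the continuous kernel $F$). Thus $K$ is a well-defined continuous symmetric kernel.

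The crux is to verify that $K$ is positive definite. Given $J_1, \ldots, J_m \in I_1$ and reals $c_1, \ldots, c_m$, I would split the indices into the set $E$ with $J_i = \emptyset$ and the set $S$ with $J_i = \{x_i\}$ a singleton, and set $\gamma = \sum_{i \in E} c_i$ and $\beta = \sum_{i \in S} c_i$. A direct expansion then gives
\[
\sum_{i, j = 1}^m c_i c_j K(J_i, J_j) = \frac{1}{a}(a\gamma - \beta)^2 + \frac{1}{a}\sum_{i, j \in S} c_i c_j F(x_i, x_j),
\]
where the first term is a nonnegative multiple of a square and the second is nonnegative because $F$ is positive definite and $a > 0$. (Equivalently, $K$ is the Gram kernel of the feature map $\{x\} \mapsto a^{-1/2}(v_x, 1)$, $\emptyset \mapsto (0, -a^{1/2})$, where $v$ is a feature map for $F$; the Schur complement of the corner entry $a$ is $a^{-1}F \succeq 0$.) This is the step I expect to carry the real content; the remainder is routine substitution.

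Finally I would check the constraints by evaluating $A_1 K$. Since $A_1 K(\{x\}) = K(\{x\}, \{x\}) + 2K(\emptyset, \{x\}) = a^{-1}(F(x,x)+1) - 2$, the requirement $A_1 K(\{x\}) \leq -1$ is equivalent to $F(x,x) \leq a - 1$, which holds. Since $A_1 K(\{x, y\}) = 2K(\{x\}, \{y\}) = 2a^{-1}(F(x,y)+1)$ for $\{x, y\} \in I_{=2}$, the requirement $A_1 K(\{x,y\}) \leq 0$ is equivalent to $F(x, y) \leq -1$, which also holds. As $K(\emptyset, \emptyset) = a$, the kernel $K$ is feasible for $\mathrm{las}_1(G)^*$ with objective value $a$, and taking the infimum over all feasible $(a, F)$ completes the argument.
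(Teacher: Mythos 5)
Your proposal is correct and follows essentially the same route as the paper: the identical kernel $K$ (with $K(\emptyset,\emptyset)=a$, $K(\emptyset,\{x\})=-1$, $K(\{x\},\{y\})=(F(x,y)+1)/a$), the same feasibility checks via $A_1K$, and a positive-definiteness verification that is just the expanded form of the Schur-complement argument the paper uses (as your own parenthetical notes). The only differences are cosmetic extras on your side: you justify $a\geq 1>0$ and the continuity of $K$ explicitly, both of which the paper leaves implicit.
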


\begin{proof}
Assume $(a, F)$ is feasible for $\vartheta'(G)^*$ and define $K \in \Ccal(I_1 \times I_1)_{\text{sym}}$ by 
\begin{align*}
& K(\emptyset, \emptyset) = a,\\ 
& K(\emptyset, \{x\}) = K(\{x\}, \emptyset) = -1 \text{ for } x \in V,\\
& K(\{x\}, \{y\}) = (F(x, y) + 1)/a \text{ for } x, y \in V.
\end{align*}
To show that $K$ is positive definite we show that the matrix
$(K(J_i, J_j))_{i, j = 1}^m$ is positive semidefinite for all $m \in
\N$ and $J_1, \ldots, J_m \in I_1$ pairwise different. If none of
the $J_i$'s is empty, then it follows directly. Otherwise we may assume
that there are $x_2, \ldots, x_m \in V$ such that $J_1 = \emptyset$ and
$J_i = \{x_i\}$ for $i = 2, \ldots, m$. We have
\[
\Big( K(J_i, J_j) - K(J_i, J_1) K(J_1, J_1)^{-1} K(J_1, J_j) \Big)_{i, j = 2}^m = a^{-1} \big(F(x_i, x_j)\big)_{i, j = 2}^m,
\]
so by the Schur complement $\big(K(J_i, J_j)\big)_{i, j = 1}^m$ is positive semidefinite.

For $x \in V$ we have 
\[
A_1K(\{x\}) = K(\{x\}, \{x\}) + K(\{x\}, \emptyset) + K(\emptyset, \{x\}) = (F(x, x) + 1)/a - 2 \leq -1,
\] 
and for $\{x,y\} \in I_{=2}$ we have 
\begin{align*}
A_1K(\{x,y\}) 
&= K(\{x\}, \{y\}) + K(\{y\}, \{x\})\\
&= (F(x, y) + 1)/a + (F(y, x) + 1)/a \leq 0. 
\end{align*}
So $K$ is feasible for $\mathrm{las}_t(G)^*$ and since $K(\emptyset, \emptyset) = a$ we have $\mathrm{las}_t(G)^* \leq \vartheta'(G)^*$.
\qed
\end{proof}

\begin{remark}
\label{rem:bounded}
  From this lemma we can see that for each $t \in \mathbb{N}$ the optimization
  problem $\mathrm{las}_t(G)^*$ has a feasible solution and so by
  strong duality the maximum in 
  $\mathrm{las}_t(G)$ is attained: By Lemma~\ref{lem:theta is
    feasible}, $\vartheta'(G)^*$ has a feasible solution, hence by the
  lemma above $\mathrm{las}_1(G)^*$ also has one. Then this can be extended
  trivially to a feasible solution for every
  $\mathrm{las}_t(G)^*$.
\end{remark}

To prove the other inequality we will use the following generalization of
the Schur complement.

\begin{lemma}
\label{lem:schur complement}
Let $X$ be a compact Hausdorff space and let $x_1,\ldots,x_n \in X$ be
elements such that the singletons $\{x_i\}$ are open. Suppose $\mu \in
\mathcal M(X \times X)_\text{sym}$ is such that the matrix $A =
(\mu(\{(x_i, x_j)\}))_{i, j =1}^n$ is positive definite. Denote by
$\mathcal F \subseteq \Ccal(X)$ the set of functions which are zero on
$\{x_1, \ldots, x_n\}$ and for $g \in \mathcal F$ define the vector
$v_g \in \R^n$ by $(v_g)_i = \mu(1_{\{x_i\}} \otimes g)$. Then $\mu$
is positive definite if and only if
\[
\mu(g \otimes g) - v_g^{\sf T} A^{-1} v_g \geq 0 \quad \text{for all} \quad g \in \mathcal F.
\]
\end{lemma}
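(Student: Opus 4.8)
The plan is to reduce the positive definiteness of $\mu$ to the scalar condition $\mu(g \otimes g) \ge 0$ for all $g \in \Ccal(X)$, and then to recognize this condition as the nonnegativity of a single quadratic form on $\R^n$ whose Schur complement is exactly the stated inequality. For the reduction, recall that $\mu$ is positive definite precisely when $\mu(K) \ge 0$ for every $K \in \Ccal(X \times X)_{\succeq 0}$. Since $X$ is compact Hausdorff, Mercer's theorem expresses any continuous positive definite kernel $K$ as a uniformly convergent series $K = \sum_k \lambda_k\, \phi_k \otimes \phi_k$ with $\lambda_k \ge 0$ and $\phi_k \in \Ccal(X)$. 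As $\mu$ is a bounded functional for the supremum norm, it commutes with this limit, so $\mu(K) = \sum_k \lambda_k\, \mu(\phi_k \otimes \phi_k)$. Hence $\mu$ is positive definite if and only if $\mu(g \otimes g) \ge 0$ for all $g \in \Ccal(X)$.

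Next I would exploit that the points $x_1,\ldots,x_n$ are isolated: because each singleton $\{x_i\}$ is open (and closed, $X$ being Hausdorff), the indicator $1_{\{x_i\}}$ lies in $\Ccal(X)$. Every $g \in \Ccal(X)$ therefore decomposes uniquely as $g = \sum_{i=1}^n c_i\, 1_{\{x_i\}} + \tilde g$ with $c_i = g(x_i)$ and $\tilde g \in \mathcal F$. Expanding $g \otimes g$ and applying $\mu$, using the symmetry of $\mu$ to identify $\mu(1_{\{x_i\}} \otimes \tilde g) = \mu(\tilde g \otimes 1_{\{x_i\}}) = (v_{\tilde g})_i$ and $\mu(1_{\{x_i\}} \otimes 1_{\{x_j\}}) = A_{ij}$, yields the quadratic form
\[
\mu(g \otimes g) = c^{\sf T} A c + 2\, c^{\sf T} v_{\tilde g} + \mu(\tilde g \otimes \tilde g), \qquad c = (c_1,\ldots,c_n)^{\sf T}.
\]
Since $A \succ 0$ is invertible, completing the square rewrites this as
\[
\mu(g \otimes g) = \bigl(c + A^{-1} v_{\tilde g}\bigr)^{\sf T} A \bigl(c + A^{-1} v_{\tilde g}\bigr) + \Bigl(\mu(\tilde g \otimes \tilde g) - v_{\tilde g}^{\sf T} A^{-1} v_{\tilde g}\Bigr).
\]

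From this identity both implications are immediate. If the Schur complement $\mu(\tilde g \otimes \tilde g) - v_{\tilde g}^{\sf T} A^{-1} v_{\tilde g}$ is nonnegative for all $\tilde g \in \mathcal F$, then both summands above are nonnegative, so $\mu(g \otimes g) \ge 0$ for every $g \in \Ccal(X)$ and $\mu$ is positive definite. Conversely, given $g \in \mathcal F$, I would apply the identity to the function $h = g + \sum_i c_i\, 1_{\{x_i\}}$ with $c = -A^{-1} v_g$; here $\tilde h = g$, so $\mu(h \otimes h)$ collapses to exactly $\mu(g \otimes g) - v_g^{\sf T} A^{-1} v_g$, which is nonnegative by positive definiteness of $\mu$. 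I expect the only genuine subtlety to be the first step: justifying that testing against rank-one kernels $g \otimes g$ suffices, which rests on Mercer's theorem together with the continuity of $\mu$. The remainder is the classical finite-dimensional Schur-complement computation, transported to the measure setting via the isolated-point decomposition.
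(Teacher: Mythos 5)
Your proof is correct and takes essentially the same approach as the paper: Mercer's theorem reduces positive definiteness of $\mu$ to the condition $\mu(g \otimes g) \geq 0$ for all $g \in \Ccal(X)$, the isolated points give the decomposition $g = \sum_i c_i 1_{\{x_i\}} + \tilde g$ with $\tilde g \in \mathcal F$, and your explicit completion of the square is the same computation the paper performs by minimizing the quadratic $p(r,g)$ over $r$ following Boyd and Vandenberghe. There are no gaps.
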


\begin{proof}
  Mercer's theorem says that a kernel $K \in \Ccal(X \times
  X)_\text{sym}$ is positive definite if and only if there exist
  sequences $(f_i)_i$ and $(\lambda_i)_i$ in $\Ccal(X)$ and $\R_{\geq
    0}$ such that $K(x, y) = \sum_{i=1}^\infty \lambda_i f_i \otimes
  f_i (x, y)$, where convergence is uniform and absolute. It follows
  that $\mu \in \mathcal M(X \times X)_{\succeq 0}$ if and only if
  $\mu(f \otimes f) \geq 0$ for all $f \in \Ccal(X)$. Now we use the
  technique as described in for instance the book by Boyd and Vandenberghe
  \cite[Appendix A.5.5]{BoydVandenberghe2004} and note that the
  measure $\mu$ is positive definite if and only if the function $p
  \colon \R^n \times \mathcal F \to \R$ given by
\begin{align*}
p(r, g) &= \mu( (r_1 1_{\{x_1\}} + \cdots + r_n 1_{\{x_n\}} + g)
\otimes (r_1 1_{\{x_1\}} + \cdots + r_n 1_{\{x_n\}} + g) )\\
&= \mu(g \otimes g) + r^{\sf T} A r + 2r^{\sf T} v_g
\end{align*}
is nonnegative on its domain.  We have $\nabla\!_r\, p(r, g) = 2Ar +
2v_g$, so for fixed $g$, the minimum of $p$ is attained for $r =
-A^{-1}v_g$.  Hence $p$ is nonnegative on its domain if and only if
$\mu(g \otimes g) - v_g^{\sf T} A^{-1} v_g \geq 0$ for all $g \in
\mathcal F$.
\qed
\end{proof}

\begin{lemma}
\label{lem:geq}
$\mathrm{las}_1(G)^* \geq \vartheta'(G)^*.$
\end{lemma}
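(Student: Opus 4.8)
The plan is to start from an arbitrary feasible kernel $K$ for $\mathrm{las}_1(G)^*$ and to build from it a feasible pair $(a,F)$ for $\vartheta'(G)^*$ whose objective value $a$ equals $K(\emptyset,\emptyset)$; taking the infimum over all such $K$ then yields $\vartheta'(G)^* \le \mathrm{las}_1(G)^*$. Since $I_1 = \{\emptyset\} \sqcup I_{=1}$ with both pieces clopen (Lemma~\ref{lem:cont card}) and $I_{=1}$ homeomorphic to $V$, I would decompose $K$ into the scalar $c = K(\emptyset,\emptyset)$, the continuous function $b(x) = K(\emptyset,\{x\})$, and the continuous kernel $\tilde F(x,y) = K(\{x\},\{y\})$ on $V \times V$. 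A direct computation of $A_1K$ shows that the feasibility constraints read $\tilde F(x,x) + 2b(x) \le -1$ for all $x \in V$ and $\tilde F(x,y) \le 0$ for all $\{x,y\} \in I_{=2}$, together with positive definiteness of $K$.

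First I would record two consequences of feasibility. Positive definiteness forces $\tilde F(x,x) = K(\{x\},\{x\}) \ge 0$, so the first constraint gives $b(x) \le -1/2$; in particular $b$ never vanishes. Moreover $c > 0$, since $c = 0$ would force $b \equiv 0$ (the $2\times 2$ principal matrix on $\emptyset,\{x\}$ is positive semidefinite with a zero diagonal entry) and then $\tilde F(x,x) \le -1$, contradicting $\tilde F(x,x) \ge 0$ (here I assume $V \neq \emptyset$, the case $V = \emptyset$ being trivial).

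The heart of the argument is a normalization step reducing to the case $b \equiv -1$. I would define the continuous positive function $d$ on $I_1$ by $d(\emptyset) = 1$ and $d(\{x\}) = -1/b(x)$, and set $K'(J,J') = d(J)\,d(J')\,K(J,J')$. This diagonal congruence preserves positive definiteness, because every finite principal matrix of $K'$ has the form $DMD$ with $M$ a principal matrix of $K$, and it leaves $c$ unchanged, while now $K'(\emptyset,\{x\}) = -1$ and $\tilde F'(x,y) := K'(\{x\},\{y\}) = \tilde F(x,y)/(b(x)b(y))$. Since $b(x)b(y) > 0$, the off-diagonal sign constraint survives: $\tilde F'(x,y) \le 0$ on $I_{=2}$. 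Controlling the diagonal requires the key inequality $\tilde F(x,x) \le b(x)^2$, which I obtain by combining $\tilde F(x,x) \le -1 - 2b(x)$ with the elementary fact $-1 - 2b(x) \le b(x)^2$, equivalent to $(b(x)+1)^2 \ge 0$; this gives $\tilde F'(x,x) = \tilde F(x,x)/b(x)^2 \le 1$. The single inequality $(b(x)+1)^2 \ge 0$ is what makes the rescaling work, and arranging this normalization so that every constraint is preserved is the main obstacle, since $K(\emptyset,\{x\})$ is not forced to equal $-1$ and a naive Schur-complement choice of $F$ fails unless $b$ is first made constant.

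Finally I would set $a = c$ and $F(x,y) = a\,\tilde F'(x,y) - 1$ on $V \times V$, which is continuous. Writing $F = c\,(\tilde F' - 1/c)$ and noting that $\tilde F' - 1/c$ is exactly the Schur complement of the normalized corner $K'(\emptyset,\emptyset) = c > 0$, positive definiteness of $F$ follows by applying the ordinary Schur-complement criterion to each finite principal submatrix of $K'$. The two remaining constraints are then immediate from $a > 0$: one has $F(x,x) = a\,\tilde F'(x,x) - 1 \le a - 1$ because $\tilde F'(x,x) \le 1$, and $F(x,y) = a\,\tilde F'(x,y) - 1 \le -1$ on $I_{=2}$ because $\tilde F'(x,y) \le 0$. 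Thus $(a,F)$ is feasible for $\vartheta'(G)^*$ with $a = K(\emptyset,\emptyset)$, which is the desired inequality.
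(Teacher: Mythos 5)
Your proof is correct, and it takes a genuinely different---and considerably more elementary---route than the paper. You stay entirely on the kernel side: from a feasible $K$ for $\mathrm{las}_1(G)^*$ you extract $c = K(\emptyset,\emptyset)$, $b(x) = K(\emptyset,\{x\})$ and $\tilde F(x,y) = K(\{x\},\{y\})$, note that feasibility forces $\tilde F(x,x) \geq 0$, $b(x) \leq -1/2$ and $c > 0$, rescale by the positive continuous function $d(\{x\}) = -1/b(x)$ to normalize the off-corner entries to $-1$ (your inequality $-1-2b(x) \leq b(x)^2$, i.e.\ $(b(x)+1)^2 \geq 0$, is exactly what keeps the diagonal constraint intact under this rescaling), and finish with the finite-dimensional Schur complement with respect to the corner entry $c$, yielding the feasible pair $(a,F) = \bigl(c,\, c\tilde F' - 1\bigr)$ for $\vartheta'(G)^*$. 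All steps check out: the constraints $A_1K(\{x\}) \leq -1$ and $A_1K(\{x,y\}) \leq 0$ decode exactly as you state, diagonal congruence preserves positive definiteness, continuity of $d$ (hence of $F$) follows from $b \leq -1/2$ together with Lemma~\ref{lem:cont card}, and your Schur-complement step only ever touches finite principal submatrices of the rescaled kernel, so no infinite-dimensional machinery is needed. The paper instead argues on the measure side: it derives the dual program $\vartheta'(G)$, establishes strong duality $\vartheta'(G) = \vartheta'(G)^*$ via the closed-cone condition, and then maps an optimal measure $\eta$ for $\vartheta'(G)$ to a feasible measure for $\mathrm{las}_1(G)$, which requires the generalized Schur complement for measures (Lemma~\ref{lem:schur complement}, proved via Mercer's theorem), a Cauchy--Schwarz estimate, and a Lagrange-multiplier identity in the spirit of Schrijver. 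Your argument avoids all of that apparatus and is pleasantly symmetric with the paper's proof of Lemma~\ref{lem:leq}, since both directions become direct kernel constructions; what the paper's longer route buys in exchange is the strong duality statement $\vartheta'(G) = \vartheta'(G)^*$ with attainment and the full chain $\vartheta'(G)^* = \vartheta'(G) \leq \mathrm{las}_1(G) \leq \mathrm{las}_1(G)^*$ relating the primal problems as well---information of independent interest, but not needed for the lemma as stated.
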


\begin{proof}
  We will use the duals of $\vartheta'(G)^*$ and
  $\mathrm{las}_1(G)^*$. We derive the dual $\vartheta'(G)$ of
  $\vartheta'(G)^*$ similarly to
  Section~\ref{ssec:primal-dual pair}. We have
\begin{align*}
\vartheta'(G) = 
\sup \Big\{ \eta(I_2 \setminus \{\emptyset\}) : \;& \eta \in \mathcal M(I_2 \setminus \{\emptyset\})_{\geq 0}, \\[-0.5em]
& \eta(I_{=1}) = 1,\; T^* \eta \in \mathcal M(I_{=1} \times I_{=1})_{\succeq 0} \Big\},
\end{align*}
where $T \colon \Ccal(I_{=1} \times I_{=1}) \to \Ccal(I_2 \setminus \{\emptyset\})$ is the operator defined by
\[
TF(S) = \begin{cases}
F(\{x\}, \{x\}) & \text{if $S = \{x\}$},\\
\frac{1}{2}(F(\{x\}, \{y\}) + F(\{y\}, \{x\})) & \text{if $S = \{x, y\}$}.
\end{cases}
\]

Now we prove strong duality: $\vartheta'(G) = \vartheta'(G)^*$ and the
optimum in $\vartheta'(G)$ is attained. Following the approach from
Section~\ref{ssec:strong duality} we first observe that every
probability measure on $I_{=1}$ is feasible for $\vartheta'(G)$. To
complete the proof we show that
\begin{align*}
K = \{(T^*\eta - \nu, \eta(I_2 \setminus \{\emptyset\})) : \; & \nu \in \mathcal M(I_{=1} \times I_{=1})_{\succeq 0}, \\
& \eta \in \mathcal M(I_2 \setminus \{\emptyset\})_{\geq 0}, \, \eta(I_{=1}) = 0\}
\end{align*}
is closed in $\mathcal M(I_{=1} \times I_{=1})_\text{sym} \times \R$. We decompose $K$ as the Minkowski difference of
\[
K_1 = \{(T^*\eta, \eta(I_2 \setminus \{\emptyset\})) : \eta \in \mathcal M(I_2 \setminus \{\emptyset\})_{\geq 0}, \, \eta(I_{=1}) = 0\}
\]
and
\[
K_2 = \{(\nu, 0) : \nu \in \mathcal M(I_{=1} \times I_{=1})_{\succeq 0}\}.
\]
It is immediate that $K_1 \cap K_2 = \{0\}$ and again using the
approach from Section~\ref{ssec:strong duality} we see that $K_1$ and
$K_2$ are closed and that $K_1$ is locally compact. 

Now we show the inequality $\vartheta'(G) \leq \mathrm{las}_1(G)$. Let
$\eta$ be an optimal solution for $\vartheta'(G)$ and define $\lambda
\in \mathcal M(I_2)$ by $\lambda(\{\emptyset\}) = 1$ and
\[
\lambda(L) = \begin{cases}
\vartheta'(G) \eta(L) & \text{if $L$ is a Borel set in $I_{=1}$},\\               
\frac{1}{2}\vartheta'(G) \eta(L) & \text{if $L$ is a Borel set in $I_{=2}$}.
\end{cases}
\]
Then 
\[
\lambda(I_{=1}) = \vartheta'(G)\eta(I_{=1}) = \vartheta'(G). 
\]

To complete the proof we have to show $A_1^* \lambda \in \mathcal
M(I_1 \times I_1)_{\succeq 0}$. We apply our generalized Schur
complement: Let $g \in \Ccal(I_1)$ be a function with $g(\emptyset) =
0$. We have
\[
A_1^* \lambda(g \otimes g) = \vartheta'(G) T^*\eta(g \otimes g).
\]
The symmetric bilinear form $(h, g) \mapsto T^*\eta(h \otimes g)$ is
positive semidefinite because $T^*\eta \in \mathcal M(I_{=1} \times
I_{=1})_{\succeq 0}$, so we can apply the Cauchy-Schwarz inequality
and optimality of $\eta$ to obtain
\[
\vartheta'(G) T^*\eta(g \otimes g) \geq \frac{\vartheta'(G)}{T^*\eta(1_{I_{=1}}\otimes 1_{I_{=1}})} (T^*\eta(1_{I_{=1}} \otimes g))^2 = (T^*\eta(1_{I_{=1}} \otimes g))^2.
\]
In the remainder of this proof we show
\[
T^*\eta(1_{I_{=1}} \otimes g) = \vartheta'(G) \eta(g).
\]
Since
\[
\vartheta'(G) \eta(g) = \lambda(g) = A_1^*\lambda(1_{\emptyset} \otimes g)
\]
the proof is then complete by using the generalized Schur complement,
Lemma~\ref{lem:schur complement}.

Inspired by Schrijver \cite[Theorem 67.10]{Schrijver2003a} we use
Lagrange multipliers. First observe that
\[
T(1_{I_{=1}} \otimes 1_{I_{=1}}) = 1_{I_2 \setminus \{\emptyset\}}
  \quad \text{and} \quad
T^*\eta(1_{I_{=1}} \otimes 1_{I_{=1}}) = \eta(I_2 \setminus \{\emptyset\}).
\]
For $u \in \R^2$ define $g_u \in \Ccal(I_{=1})$
by $g_u = u_1 g + u_2 (1_{I_{=1}}-g)$. For each $u \in \R^2$ with
$\eta(g_u^2) = 1$, the measure $\tilde\eta$ defined by $d\tilde\eta(S)
= T(g_u \otimes g_u)(S) d\eta(S)$ is feasible for $\vartheta'(G)$. So,
if we consider the problem of maximizing $T^*\eta(g_u \otimes g_u)$
over all $u \in \R^2$ for which $\eta(g_u^2) = 1$, then optimality of
$\eta$ implies that an optimal solution is attained for $u = 1$.

It follows that there exists a Lagrange multiplier $c \in \R$ such that
\[
\frac{\partial}{\partial u_i}\bigg|_{u=(1,1)} T^*\eta(g_u \otimes g_u) = c \frac{\partial}{\partial u_i}\bigg|_{u=(1,1)} \eta(g_u^2) \quad \text{for} \quad i=1,2.
\]
Since
\[
T^*\eta(g_u \otimes g_u) = 
u^{\sf T}
\begin{pmatrix} T^*\eta(g \otimes g) & T^*\eta(g \otimes (1_{I_{=1}}-g)) \\ T^*\eta(g \otimes (1_{I_{=1}}-g)) & T^*\eta((1_{I_{=1}}-g) \otimes (1_{I_{=1}}-g))\end{pmatrix}
u
\]
and
\[
\eta(g_u^2) = 
u^{\sf T}
\begin{pmatrix} \eta(g^2) & \eta(g(1_{I_{=1}}-g)) \\ \eta(g(1_{I_{=1}}-g)) & \eta((1_{I_{=1}}-g)^2)\end{pmatrix}
u
\]
we have
\[
T^*\eta(g \otimes 1_{I_{=1}}) = c \eta(g) \quad \text{and} \quad T^*\eta((1_{I_{=1}}-g) \otimes 1_{I_{=1}}) = c \eta(1_{I_{=1}}-g).
\]
By summing the last two equations we see that $c = \vartheta'(G)$, hence
we have the desired equality $T^*\eta(g \otimes 1_{I_{=1}}) = \vartheta'(G) \eta(g)$.
\qed
\end{proof}

\subsection{Three-point bounds}

In this section we modify the $2t$-point bound $\mathrm{las}_t(G)$ to obtain a $2t+1$-point bound for sufficiently symmetric graphs $G$. For the spherical code graph this gives an easy derivation of a variation of the three-point bound given by Bachoc and Vallentin in \cite{BachocVallentin2008}.

Let $G = (V, E)$ be a compact topological packing graph. We are interested in two groups related to $G$. The group of graph automorphisms of $G$ and the
group of homeomorphisms of the topological space $V$. When we endow the
latter group with the compact-open topology, it is a topological group
with a continuous action on $V$; see Arens \cite{Arens1946}. In the
special case when $G$ is a distance graph, as defined in
Section~\ref{ssec:topological packing graphs}, the former group is
contained in the latter. We say that $G$ is \emph{homogeneous} if
there exists a compact subgroup of the group of homeomorphisms which
consists only of graph automorphisms and is such that the action of
$\Gamma$ on $V$ is transitive.

Fix a point $e \in V$. By $G^e$ we denote the induced subgraph of $G$
with vertex set
\[
V^e = \{x \in V : x \neq e \text{ and } \{e, x\} \not \in E\}.
\]
It follows that $G^e$ is also a compact topological packing graph. We have $\alpha(G) \geq 1 + \alpha(G^e)$, and if $G$
is homogeneous, then $\alpha(G) = 1 + \alpha(G^e)$: If $S$ is an independent set of $G$, then there exists a graph automorphism $\gamma$ with $e \in \gamma S$, and $(\gamma S) \setminus \{e\} \subseteq V^e$ is an independent set for $\alpha(G^e)$. So, for
computing an upper bound on the independence number of $G$ we can also
compute $1 + \mathrm{las}_t(G^e)$. This yields a bound which is at
least as good as $\mathrm{las}_t(G)$:
\begin{lemma}
Suppose $G$ is a compact topological packing graph. Then
\[
1 + \mathrm{las}_t(G^e) \leq \mathrm{las}_t(G).
\]
\end{lemma}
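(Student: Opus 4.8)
The plan is to prove the inequality directly on the primal side, by converting any feasible solution of $\mathrm{las}_t(G^e)$ into a feasible solution of $\mathrm{las}_t(G)$ whose objective value is exactly one larger. Write $I_s$ for the independent sets of $G$ of cardinality at most $s$ and $I_s^e$ for those of $G^e$, and let $A_t^e \colon \Ccal(I_t^e \times I_t^e)_{\mathrm{sym}} \to \Ccal(I_{2t}^e)$ be the operator of Section~\ref{ssec:generalization} for the graph $G^e$. The guiding observation is that $s_e \colon Q \mapsto Q \cup \{e\}$ is a bijection between the independent sets of $G^e$ and the independent sets of $G$ that contain $e$: since $Q \subseteq V^e$ means $e$ is adjacent to no vertex of $Q$, the set $Q \cup \{e\}$ is independent in $G$, and conversely. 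Morally, adjoining $e$ to a distribution over independent sets of $G^e$ yields a distribution over independent sets of $G$ in which every set is one vertex larger, which is the source of the additive $+1$.

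Concretely, given any feasible $\lambda$ for $\mathrm{las}_t(G^e)$, I would define a measure $\mu$ on $I_{2t}$ by
\[
\mu = \lambda + (s_e)_*\lambda\big|_{I_{2t}},
\]
where $(s_e)_*\lambda$ is the pushforward, a positive Radon measure because $s_e$ is continuous (the union map being continuous, cf.\ Lemma~\ref{lem:bounded and surjective}), and the restriction to $I_{2t}$ discards the mass that $s_e$ sends to sets of cardinality $2t+1$; this restriction is legitimate since $I_{2t}$ is clopen in $I_{2t+1}$ by Lemma~\ref{lem:cont card}. Then $\mu$ is positive, $\mu(\{\emptyset\}) = \lambda(\{\emptyset\}) = 1$, and because $\emptyset$ is the only set that $s_e$ carries to a singleton (namely to $\{e\}$) while the singletons of $\lambda$ are fixed and avoid $\{e\}$, the objective becomes $\mu(I_{=1}) = 1 + \lambda(I_{=1}^e)$. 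Taking the supremum over $\lambda$ then gives $\mathrm{las}_t(G) \geq 1 + \mathrm{las}_t(G^e)$.

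The substance is the verification that $A_t^*\mu \in \mathcal M(I_t \times I_t)_{\succeq 0}$. For $K \in \Ccal(I_t \times I_t)_{\succeq 0}$ I would set, for $L, L' \in I_t^e$,
\[
K^e(L, L') = \sum_{\substack{\epsilon, \epsilon' \in \{0,1\}\\ |L|+\epsilon \leq t,\ |L'|+\epsilon' \leq t}} K\big(L \cup \epsilon\{e\},\, L' \cup \epsilon'\{e\}\big),
\]
where $\epsilon\{e\}$ denotes $\{e\}$ if $\epsilon = 1$ and $\emptyset$ otherwise. Reindexing the defining sum of $A_t^e K^e(Q)$ by $J = L \cup \epsilon\{e\}$, $J' = L' \cup \epsilon'\{e\}$, and observing that $(J \cup J')\setminus\{e\} = Q$ splits into the disjoint cases $J \cup J' = Q$ and $J \cup J' = Q \cup \{e\}$, yields the identity
\[
A_t^e K^e(Q) = A_t K(Q) + A_t K(Q \cup \{e\}) \qquad (Q \in I_{2t}^e),
\]
with the second summand vanishing when $|Q| = 2t$, since $\sum_{J \cup J' = R,\, J,J' \in I_t} K(J,J')$ is empty for $|R| > 2t$. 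Combining this with the definition of $\mu$ and the pushforward formula gives $\langle A_t K, \mu\rangle = \int A_t^e K^e \, d\lambda = \langle K^e, (A_t^e)^*\lambda\rangle \geq 0$, the last step using feasibility of $\lambda$ once $K^e$ is known to be positive definite. Positive definiteness of $K^e$ I would obtain by a congruence argument: for $L_1,\ldots,L_m \in I_t^e$ the matrix $(K^e(L_i,L_j))$ equals $W^\tsp M W$, where $M$ is the positive semidefinite Gram matrix of $K$ on the finite set $\{L_i\} \cup \{L_i \cup \{e\} : |L_i| < t\}$ and the columns of $W$ are the $0/1$ vectors selecting, for each $L_i$, the coordinate $L_i$ and, when $|L_i| < t$, also $L_i \cup \{e\}$.

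The main obstacle is the cardinality bookkeeping forced by adjoining $e$: the map $s_e$ raises every cardinality by one, so it pushes $I_{2t}^e$ partly outside $I_{2t}$ and pushes sets of size $t$ outside $I_t$, which is exactly why both $\mu$ and $K^e$ carry size-dependent truncations. Two facts render these truncations harmless. First, $A_t$ annihilates every set of cardinality exceeding $2t$, so the discarded mass of $(s_e)_*\lambda$ on $(2t+1)$-sets never enters the pairing $\langle A_t K, \mu\rangle$; this is what lets the identity above be used verbatim in the integral. Second, the size-truncated definition of $K^e$ is still presented as a congruence of a Gram matrix, hence remains positive definite. Continuity of $K^e$ is not problematic, because the cardinality strata of $I_t^e$ are clopen (Lemma~\ref{lem:cont card}) and the union-with-$\{e\}$ map is continuous (Lemma~\ref{lem:bounded and surjective}), so $K^e \in \Ccal(I_t^e \times I_t^e)_{\mathrm{sym}}$.
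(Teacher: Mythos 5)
Your proof is correct, and it is not the paper's proof --- in fact it repairs a genuine error in the paper's argument. The paper sets $\lambda = \delta_{\{e\}} + \lambda^e$ and justifies the moment condition via the identity $A_t^*\lambda = \delta_{\{e\}} \otimes \delta_{\{e\}} + A_t^*\lambda^e$. That identity is false: the pairs $(\emptyset,\{e\})$ and $(\{e\},\emptyset)$ also have union $\{e\}$, so in fact
\[
A_t^*\delta_{\{e\}} = \delta_{(\{e\},\{e\})} + \delta_{(\emptyset,\{e\})} + \delta_{(\{e\},\emptyset)},
\]
and the two cross terms are exactly what positive definiteness cannot absorb. The paper's $\lambda$ is in general infeasible: take $G$ to be two nonadjacent vertices $e,x$ with $t=1$, and $\lambda^e = \delta_\emptyset + \delta_{\{x\}}$; for $g \in \Ccal(I_1)$ with $g(\emptyset) = 1$ and $g(\{e\}) = g(\{x\}) = -1$ one computes
\[
\langle A_1(g \otimes g), \delta_{\{e\}} + \lambda^e \rangle = (1-2) + 1 + (1-2) = -1 < 0,
\]
although $g \otimes g \in \Ccal(I_1 \times I_1)_{\succeq 0}$. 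Intuitively, the paper's measure gives mass $1$ to $\{e\}$ and to $\{x\}$ but mass $0$ to the independent set $\{e,x\}$, which no moment measure of positive type can do.

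Your pushforward construction $\mu = \lambda + (s_e)_*\lambda|_{I_{2t}}$ puts the missing mass on the sets $Q \cup \{e\}$ and is the correct measure-theoretic analogue of the standard finite-graph argument (adjoin $e$ to every set in the support of the pseudo-distribution); in matrix language, the new moment matrix is $S^\tsp M_t(y^e) S$ for a $0/1$ selection matrix $S$, which is precisely your congruence $(K^e(L_i,L_j)) = W^\tsp M W$. Your bookkeeping is also sound: the truncation to $I_{2t}$ is harmless because $A_tK$ vanishes on sets of cardinality exceeding $2t$, so the identity $A_t^e K^e(Q) = A_t K(Q) + A_t K(Q \cup \{e\})$ integrates against $\lambda$ without error, and continuity of $K^e$ follows from the clopen cardinality strata (Lemma~\ref{lem:cont card}) together with continuity of the union map. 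So the lemma is true, and your argument, unlike the one printed in the paper, actually proves it; the objective calculation $\mu(I_{=1}) = 1 + \lambda(I^e_{=1})$ and the normalization $\mu(\{\emptyset\}) = 1$ agree with the paper's, so the conclusion $1 + \mathrm{las}_t(G^e) \leq \mathrm{las}_t(G)$ follows as you state.
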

\begin{proof}
  We denote the sets of independent sets of $G^e$ by $I_t^e$ and
  $I_{=t}^e$.
  Suppose $\lambda^e$ is feasible for $\text{las}_t(G^e)$. Let
  $\lambda = \delta_e + \lambda^e$. We have $\lambda \geq 0$ and
  $\lambda(\{\emptyset\}) = 1$. Moreover, since $A_t^*\lambda =
  \delta_e \otimes \delta_e + A_t^*\lambda^e$ and $A_t^*\lambda^e \in
  \mathcal{M}(I_t^e \times I_t^e)_{\succeq 0} \subseteq
  \mathcal{M}(I_t \times I_t)_{\succeq 0}$ we have $A_t^*\lambda \in
  \mathcal{M}(I_t \times I_t)_{\succeq 0}$. So $\lambda$ is feasible
  for $\mathrm{las}_t(G)$. We have $1 + \lambda^e(I_{=1}^e) = \lambda(I_{=1})$ which completes the proof.
\qed
\end{proof}

In the handbook chapter
\cite [Theorem 9.15]{BachocGijswijtSchrijverVallentin2010} Bachoc, Gijswijt,
Schrijver, and Vallentin gave a simplified, but computationally
slightly less powerful, variation of the three-point bound given by Bachoc and Vallentin \cite{BachocVallentin2008} for spherical codes. In both
cases the bounds are formulated using the representation theory coming
from the action of the orthogonal group on the unit sphere
$S^{n-1}$. The variation admits a generalization to compact
topological packing graphs wich we can formulate as
\begin{align*}
1 + \inf \Big\{ F(e, e) : \; & F \in \Ccal(V^e \cup \{e\} \times V^e
\cup \{e\})_{\succeq 0}, \\[-0.3ex]
& F(x, x) + F(e, x) + F(x, e) \leq -1 \text{ for } \{e, x\} \in I_{=2},\\[-0.5ex]
& F(x, y) \leq 0 \text{ for } \{e, x, y\} \in I_{=3} \Big\}.
\end{align*}

\begin{proposition}
Suppose $G$ is a compact topological packing graph. Then the optimal value of the optimization problem above equals $1 + \mathrm{las}_1(G^e)^*$.
\end{proposition}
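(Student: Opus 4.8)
The plan is to identify the feasible solutions of the displayed minimization problem with those of $\mathrm{las}_1(G^e)^*$ by a mere relabeling of the index set, in a way that preserves both positive definiteness and the objective value. Once such an objective-preserving bijection of feasible sets is in place, the two inner infima coincide, and adding the constant $1$ gives the proposition. Throughout, write $W = V^e \cup \{e\}$, and recall from the excerpt that $G^e$ is again a compact topological packing graph, so that $V^e$, and hence $W$, is compact Hausdorff.

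First I would establish the topological identification $\phi \colon I_1^e \to W$ given by $\phi(\emptyset) = e$ and $\phi(\{x\}) = x$ for $x \in V^e$; here I use that $I_{=1}^e$ is canonically homeomorphic to $V^e$. The crucial point is that $e$ is an \emph{isolated} point of $W$: by the topological packing graph condition there is an open clique $C_e \ni e$, and every vertex of $C_e$ other than $e$ is adjacent to $e$ and therefore lies outside $V^e$, so $C_e \cap W = \{e\}$ is open in $W$. Hence $W = \{e\} \sqcup V^e$ as a topological disjoint union. On the other side, Lemma~\ref{lem:cont card} applied to $G^e$ makes $I_{=1}^e$ clopen in $I_1^e$, so that $I_1^e = \{\emptyset\} \sqcup I_{=1}^e$ is also a disjoint union of clopen pieces. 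Matching the two decompositions shows that $\phi$ is a homeomorphism.

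Next I would transport kernels along $\phi$. The pullback $F \mapsto K$ with $K(J, J') = F(\phi(J), \phi(J'))$ is a linear bijection $\Ccal(W \times W)_{\text{sym}} \to \Ccal(I_1^e \times I_1^e)_{\text{sym}}$, and since $\phi \times \phi$ is a homeomorphism it preserves continuity and positive definiteness, hence restricts to a bijection of the cones $\Ccal(W \times W)_{\succeq 0} \to \Ccal(I_1^e \times I_1^e)_{\succeq 0}$; moreover $K(\emptyset, \emptyset) = F(e, e)$, so the objectives agree. It then remains to match the constraints. Since $I_2^e \setminus \{\emptyset\} = I_{=1}^e \sqcup I_{=2}^e$, the constraints of $\mathrm{las}_1(G^e)^*$ split into two families. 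For a vertex $\{x\} \in I_{=1}^e$ one computes $A_1 K(\{x\}) = K(\{x\}, \{x\}) + 2 K(\emptyset, \{x\})$, so via $\phi$ the requirement $A_1 K(\{x\}) \leq -1$ becomes $F(x, x) + F(e, x) + F(x, e) \leq -1$. For an edge $\{x, y\} \in I_{=2}^e$, equivalently $\{e, x, y\} \in I_{=3}$, one has $A_1 K(\{x, y\}) = 2 K(\{x\}, \{y\})$, so $A_1 K(\{x, y\}) \leq 0$ becomes $F(x, y) \leq 0$. These are exactly the two constraint families of the displayed problem, and conversely, so $\phi$ induces an objective-preserving bijection of feasible sets and the inner infima are equal.

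The main obstacle I anticipate is the topological identification of the second step, and in particular the verification that $e$ is isolated in $W$: this is the only place where the topological packing graph hypothesis is genuinely used, and it is what lets the clopen splitting of $I_1^e$ line up with $W$. The remaining matching of objectives and constraints is routine bookkeeping, with the factors of two arising only from the symmetry of the kernels.
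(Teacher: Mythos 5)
Your proposal is correct and takes essentially the same approach as the paper: the paper's proof is exactly this relabeling $F \mapsto K$ along the identification $e \leftrightarrow \emptyset$, $x \leftrightarrow \{x\}$, asserted in one line to give an objective-preserving bijection between the two feasible regions. Your additional care with the topology (isolation of $e$ in $V^e \cup \{e\}$ and the clopen splitting of $I_1^e$) justifies continuity of the transported kernels, a point the paper leaves implicit; the only slip is calling $\{x,y\} \in I_{=2}^e$ an ``edge'' when it is in fact an independent pair, though your computations use the correct set.
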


\begin{proof}
Given $F \in \Ccal(V^e \cup \{e\} \times V^e
\cup \{e\})_\text{sym}$ we define $K \in \mathcal C(I_1 \times I_1)_\text{sym}$ by
\begin{align*}
&K(\emptyset, \emptyset) = F(e, e),\\
&K(\emptyset, \{x\}) = K(\{x\}, \emptyset) = F(e, x) \text{ for } \{e, x\} \in I_{=2},\\
&K(\{x\}, \{y\}) = F(x, y) \text{ for } \{e, x, y\} \in I_{=3}.
\end{align*}
The above construction gives a bijection from the feasible region of the above optimization problem onto the feasible region of $\mathrm{las}_1(G^e)$, and since it preserves objective values this completes the proof.
\qed
\end{proof}

\begin{acknowledgements}
We would like to thank Evan DeCorte and Crist\'obal Guzm\'an for very helpful
discussions. We also thank the referee whose suggestions helped to improve the paper.
\end{acknowledgements}

\end{document}